\date{\today}
\title[The analytic torsion of a cone over a sphere]{The analytic torsion of a cone over a sphere}
\thanks{2000 {\em Mathematics Subject Classification: 58J52}.\\
}
\author{L. Hartmann  and M. Spreafico}
\address{\tt ICMC, Universidade S\~{a}o Paulo, S\~{a}o Carlos, Brazil.}
\email{mauros@icmc.usp.br}
\newtheorem{theo}{Theorem}
\newtheorem{lem}{Lemma}
\newtheorem{corol}{Corollary}
\newtheorem{defi}{Definition}
\newtheorem{conj}{Conjecture}
\renewcommand{\Re}{{\rm Re}}
\newcommand{\Sp}{{\rm Sp}}
\newcommand{\beq}{\begin{equation}}
\newcommand{\eeq}{\end{equation}}
\newcommand{\Z}{{\mathds{Z}}}
\newcommand{\R}{{\mathds{R}}}
\newcommand{\C}{{\mathds{C}}}
\newcommand{\T}{{\mathcal{T}}}
\newcommand\e{{\rm e}}
\newcommand{\A}{{\mathcal{A}}}
\renewcommand{\P}{{\mathcal{P}}}
\newcommand{\B}{{\mathcal{B}}}
\renewcommand{\b}{{\partial}}
\newcommand{\Vol}{{\rm Vol}}
\newcommand{\ec}{{\mathsf e}}
\renewcommand{\ge}{{\mathsf g}}
\date{}
\DeclareMathOperator*{\Rz}{Res_0}
\DeclareMathOperator*{\Ru}{Res_1}
\begin{document}


\maketitle

\begin{abstract} We compute the analytic torsion of a cone over a sphere of dimension 1, 2, and 3, and we conjecture a general formula for the cone over an odd dimensional sphere.
\end{abstract}

\section{Introduction}
\label{s0}

An important open problem in geometric and global analysis is to extend the Cheeger M\"uller theorem to spaces with singularities of conical type. The aim of this work is to give some contribution to the quantitative aspect of the problem. For we give explicit formulas for the analytic torsion of the class of low dimensional spaces consisting of cones over  spheres. The results cover also the smooth case of the discs, and therefore provides also a contribution to the discussion on the extension of the Cheeger M\"uller theorem to smooth manifolds with boundary, namely to the problem of establish the correct boundary term.

Let $(W,g)$ be a closed connected  Riemannian manifold of dimension $n$ with metric $g$. Let $C W$ denote the completed finite metric cone over $W$, namely the space $[0,l]\times W$, with the metric $dr\otimes dr+r^2 g$, on $(0,l]\times W$, as defined in \cite{Che0} (2.1). A interesting open problem concerning the metric cone is to compute its analytic torsion. The analytic torsion of a smooth connected Riemannian manifold $(M,g)$ of dimension $m$ is defined by \cite{RS}, Section 6,
\beq\label{analytic}
\log T(M)=\frac{1}{2}\sum_{q=1}^m(-1)^q q\zeta'(0,\Delta^{(q)}),
\eeq
where $\Delta^{(q)}$ is the Laplace operator on $q$-forms on $M$, and the zeta function is defined by \cite{RS} (1.5)
\[
\zeta(s,\Delta^{(q)})=\sum_{\lambda\in \Sp_+\Delta^{(q)}}\lambda^{-s},
\]
for $\Re(s)>\frac{m}{2}$, and by analytic continuation elsewhere. This definition extends to the case of a cone $CW$ using the Hodge theory and the functional calculus for the Laplace operator on forms developed in \cite{Che0}. 
More precisely, one would like to obtain formulas for $T(CW)$ as a function of some geometric invariant of $W$. Starting from the result of Cheeger \cite{Che0} \cite{Che2}, and applying absolute or relative boundary conditions \cite{RS}, Section 3, one obtain quite easily the eigenvalues of the Laplace operator on forms, necessary to compute the torsion. These eigenvalues turn out to be sequences of real numbers $\Sp_+\Delta^{(q)}=\{\lambda^{(q)}_{\mu,k}\}$ that correspond to the zeros of some linear combinations of Bessel functions of the first kind and their derivative. The index $k$ enumerate the zero, and the index $\mu$ is given by some explicit function of the eigenvalues of the Laplace operator on forms on the section of the cone, namely on $W$. The zeta function of this type of double sequences can be tackled using some recent results of Spreafico \cite{Spr3} \cite{Spr5} \cite{Spr6} \cite{Spr9}. The general strategy is to prove that the sequence $\Sp_+\Delta^{(q)}$ is spectrally decomposable over some sequence $\Sp_+ \Delta_{W}^{(p)}$ of eigenvalues of the Laplacian on forms on the section. Then, one can apply the result of Spreafico to obtain the value $\zeta'(0,\Delta^{(q)})$. The final formula can be very complicate in general, and not particularly illuminating. The possibility of reducing and simplifying this formula is based on two facts: one fact is the explicit form of the coefficients of the uniform asymptotic expansion of the Bessel function $I_\nu(\nu z)$ (and of its derivative) with respect to the order $\nu$; the second fact, is the explicit knowledge  of the eigenvalues of the Laplacian on forms on the section. While the first fact is proved to be true in general, the second one is not clear. For this reason it is interesting to study particular cases where the second fact is also true. 

In this note, we study the analytic torsion of the cone over an $n$-dimensional sphere. More precisely, we prove in Section \ref{s4} the following theorem, and we state a conjecture for the general case at the end of Section \ref{s5}.

\begin{theo} \label{t1} The analytic torsion of the cone $C_\alpha S^{n}_{l\sin\alpha}$ of angle $\alpha$, and length $l>0$, over the sphere $S^{n}$, with the standard metric induced by the immersion in $\R^{n+2}$, and absolute boundary conditions is, for $n=1,2$, and $3$:
\begin{align*}
\log T(C_\alpha S^{1}_{l\sin\alpha})=&\frac{1}{2}\log {\rm Vol}(C_\alpha S^{1}_{l\sin\alpha})+\frac{1}{2}\sin\alpha=\frac{1}{2}\log\pi l^2\sin\alpha+\frac{1}{2}\sin\alpha,\\
\log T(C_\alpha S^{2}_{l\sin\alpha})=&\frac{1}{2}\log {\rm Vol}(C_\alpha S^{2}_{l\sin\alpha})-\frac{1}{2}f({\rm csc}\alpha)+\frac{1}{4}\sin^2\alpha\\
=&\frac{1}{2}\log \frac{4\pi l^3\sin^2\alpha}{3}-\frac{1}{2}f({\rm csc}\alpha)+\frac{1}{4}\sin^2\alpha,\\
\log T(C_\alpha S^{3}_{l\sin\alpha})=&\frac{1}{2}\log {\rm Vol}(C_\alpha S^{3}_{l\sin\alpha})
+\frac{3}{4}\sin\alpha-\frac{1}{12}\sin^3\alpha\\
=&\frac{1}{2}\log \frac{\pi^2l^4\sin^3\alpha}{2}+\frac{3}{4}\sin\alpha-\frac{1}{12}\sin^3\alpha,
\end{align*}
where the function $f(\nu)$ is given at the end of Section \ref{s4}.

\end{theo}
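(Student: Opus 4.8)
The plan is to follow the general strategy outlined in the introduction: realize $\Sp_+\Delta^{(q)}$ on the cone $C_\alpha S^n$ as a double sequence indexed by the eigenvalues of the Laplacian on forms on $S^n$, and then apply the spectral decomposition machinery of Spreafico \cite{Spr3,Spr5,Spr6,Spr9}. First I would use Cheeger's Hodge theory \cite{Che0,Che2} together with absolute boundary conditions \cite{RS} to write down, for each $n=1,2,3$ and each degree $q$, the eigenvalues $\lambda^{(q)}_{\mu,k}$ explicitly as the squares of zeros (or zeros of derivative-combinations) of Bessel functions $J_{\mu_q}$, where $\mu_q$ is an explicit function of the eigenvalues $p(p+n-1)$ (suitably shifted by form degree) of the Laplacian on coclosed $p$-forms on $S^n$; for $S^1,S^2,S^3$ these Laplace spectra and their multiplicities are completely known (spherical harmonics), which is exactly the ``second fact'' the introduction says is the nontrivial input. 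I would organize the forms on the cone into the coexact/exact/harmonic pieces à la Cheeger, noting the boundary-condition-induced pairings between degrees $q$ and $q+1$ so that many contributions cancel in the alternating sum \eqref{analytic}.

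Next, for each relevant double sequence I would verify the hypotheses of spectral decomposability (polynomial-type growth, existence of the relevant uniform asymptotic expansions) and invoke Spreafico's formula to extract $\zeta'(0,\Delta^{(q)})$. The key computational engine here is the uniform asymptotic expansion of $I_\nu(\nu z)$ and $I_\nu'(\nu z)$ in the order $\nu$ — the Olver expansion with its polynomials $u_k(t)$, $t=(1+z^2)^{-1/2}$ — whose coefficients are known in closed form; these feed into the ``non-geometric'' analytic pieces of $\zeta'(0)$. I would separate the answer into: (i) a part coming from the residues/finite parts of auxiliary zeta functions built from the Laplace spectrum of the section (this is what produces the $\log\Vol$ term and the Bessel-related function $f$), and (ii) polynomial-in-$\sin\alpha$ corrections coming from the asymptotic-expansion coefficients evaluated against the section spectrum. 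Then I would assemble $\log T = \frac12\sum_q(-1)^q q\,\zeta'(0,\Delta^{(q)})$, using the degree-pairings to collapse the sum, and simplify.

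The main obstacle I expect is the bookkeeping and cancellation in the alternating sum: each degree contributes a messy expression (infinite sums over the section spectrum of values of Hurwitz-type and Bessel-$\zeta$ functions, plus several ``anomalous'' boundary terms), and the simplification to the compact closed forms stated in Theorem \ref{t1} requires that almost everything cancel or telescope, leaving only $\frac12\log\Vol$ plus a small polynomial in $\sin\alpha$ (and $f(\csc\alpha)$ in the $S^2$ case, where the even dimension of the section breaks the clean cancellation). Making this cancellation transparent — rather than a brute-force miracle — is the crux; I would try to exploit Poincaré–Lefschetz duality on the cone relating degree $q$ and $n+1-q$ contributions, and the known relation between absolute and relative conditions, to pair up terms before evaluating them. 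A secondary difficulty is handling the low-order ``small'' eigenvalues $\mu_q$ (e.g. $\mu=0$ or half-integer values) where the generic Bessel analysis degenerates and must be treated by hand; these exceptional terms are precisely what can leave a residual contribution like $\frac12\sin\alpha$ or $\frac34\sin\alpha-\frac1{12}\sin^3\alpha$. Finally, I would record the definition of $f(\nu)$ emerging from the $S^2$ computation (the finite part of a Bessel-zeta combination at $s=0$) and check all three formulas against the known smooth-disc cases $\alpha=\pi/2$ as a consistency test.
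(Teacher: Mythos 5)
Your outline is essentially the strategy the paper follows (explicit spectra from Cheeger's eigenforms plus the Ikeda--Taniguchi spectra on $S^n$, double zeta functions decomposed over the section spectrum, Olver's uniform expansions of $I_\nu(\nu z)$, $I'_\nu(\nu z)$, assembly of the alternating sum), but two points, as you state them, would not go through. First, you propose to verify spectral decomposability for each relevant double sequence and invoke Spreafico's formula to extract $\zeta'(0,\Delta^{(q)})$ degree by degree. For the sequences that actually occur in the $n=3$ case, namely $S=\{n(n+2):j_{\mu_{1,n},k}^2\}$, $\hat S$ with the zeros $j'_{\mu_{1,n},k}$, and $S_0$, $S_\pm$ built from the zeros of $\pm J_\mu(z)+zJ'_\mu(z)$, condition (2) of Definition \ref{spdec} fails: the polynomials $P_{\rho}(\lambda)$ multiplying $\log u_n$ do not vanish at $\lambda=0$, so Theorem \ref{t4} cannot be applied to the individual zeta functions $Z$, $\hat Z$, $Z_0$, $Z_\pm$. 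The paper repairs this by working only with the combinations $\hat Z-Z$ and $2Z_0-Z_+-Z_-$ that enter the torsion, for which the offending terms cancel (Corollary \ref{c}), and by the further refinement of Theorem \ref{tt} that only those indices $\sigma_h$ at which $\zeta(s,U)$ genuinely has poles contribute, which is needed because for $U_0$, built from the $S^3$ spectrum, the expansion contains orders where $\zeta(s,U_0)$ is regular. So ``pair up before evaluating'' is not a bookkeeping convenience; it is forced by the hypotheses, and you should say which pairings you take.

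Second, the pairing you propose, Poincar\'e--Lefschetz duality between degrees $q$ and $n+1-q$, is not the mechanism: on the $(n+1)$-dimensional cone the Hodge star exchanges absolute and relative boundary conditions, so it does not pair terms inside the purely absolute alternating sum. The cancellations that collapse $t(s)=-\frac{1}{2}\zeta(s,\Delta^{(1)})+\zeta(s,\Delta^{(2)})-\frac{3}{2}\zeta(s,\Delta^{(3)})+2\zeta(s,\Delta^{(4)})$ down to $l^{2s}\bigl(\frac{1}{2}z_1-\frac{1}{2}z_2+\hat Z-Z+Z_0-\frac{1}{2}Z_+-\frac{1}{2}Z_-\bigr)$ come from the adjacent-degree $d$/$d^\dagger$ pairing visible in the explicit spectra of Lemmas \ref{eig1}--\ref{eig3} (each coexact family in degree $q$ reappears in degree $q+1$ with the same Bessel zeros), together with the harmonic-form families, which you correctly flag as exceptional and which are evaluated by the simple Bessel zeta values in equation (\ref{p00}). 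With these two repairs your plan coincides with the paper's proof; note also that the paper computes only the $n=3$ case in full and quotes \cite{HMS} for $n=1,2$, where $f(\nu)$ arises exactly as you anticipate, as the finite part at $s=0$ of the series $F(s,\nu)$ coming from the $\log\bigl(1\pm\frac{1}{2\mu_n}\bigr)$ terms.
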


\section{Geometric setup}
\label{s1}

We describe in this section the geometric setup in details. Let $S^n_b$ be the standard sphere of radius $b>0$ in $\R^{n+1}$, $S^{n}_b=\{x\in\R^{n+1}~|~|x|=b\}$ (we simply write $S^n$ for $S^n_1$). Imbed $S^n_{l\sin\alpha}$ in $\R^{n+2}$, with center in the point $\{0,...,0,l\sin\alpha\}$, with $l>0$. Let $C_\alpha
S^n_{l\sin\alpha}$ be the cone of angle $\alpha$ over $S^n_{l\sin\alpha}$ in $\R^{n+2}$. Note that the disc corresponds to $D^{n+1}_l=C_\frac{\pi}{2} S^{n}_l$. We parameterize $C_{\alpha}S^n_{l\sin\alpha}$ by
\begin{equation*}\label{}C_{\alpha}S_{l\sin\alpha}^{n}=\left\{
\begin{array}{rcl}
x_1&=&r \sin{\alpha} \sin{\theta_n}\sin{\theta_{n-1}}\cdots\sin{\theta_3}\sin{\theta_2}\cos{\theta_1} \\[8pt]
x_2&=&r \sin{\alpha} \sin{\theta_n}\sin{\theta_{n-1}}\cdots\sin{\theta_3}\sin{\theta_2}\sin{\theta_1} \\[8pt]
x_3&=&r \sin{\alpha} \sin{\theta_n}\sin{\theta_{n-1}}\cdots\sin{\theta_3}\cos{\theta_2} \\[8pt]
&\vdots& \\
x_{n+1}&=&r \sin{\alpha} \cos{\theta_n} \\[8pt]
x_{n+2}&=&r \cos{\alpha}
\end{array}
\right.\end{equation*}
with $r \in [0,l]$, $\theta_1 \in [0,2\pi]$, $\theta_2,\ldots,\theta_n \in [0,\pi]$, $\alpha$ is a fixed positive real number, and $0<a=\frac{1}{\nu}= \sin{\alpha}\leq 1$. This is a compact connected space. The metric induced by the immersion in $\R^{n+2}$ is 
\begin{align*}
g &=dr \otimes dr + r^2 a^2 g_{S^{n}_{1}},\\
\end{align*}
and is smooth for $r>0$. Comparing with \cite{Che0}, Section 1, we see that the space $C_\alpha S^n_{l\sin\alpha}$ is a completed metric cone, and $X_\alpha=C_\alpha S^n_{l\sin\alpha}-\{0\}$, is a metric cone over $S^n_{l\sin \alpha}$. Note that $X_\alpha$ is not smooth, since the radius of the sphere is not unitary. Note also that the space $C_\alpha S^n_{l\sin\alpha}$ is simply connected (in fact it has the homotopy type of a point).

In order to define the opportune self adjoint extension of the Laplace operator on forms, we split the space of forms near the boundary  as direct sum $\Lambda C_\alpha S^n_{l\sin\alpha}=\Lambda  S^n_{l\sin\alpha}\oplus N^* C_\alpha S^n_{l\sin\alpha}$, where $N^*$ is the dual to the normal bundle to the boundary. Locally, this reads as follows. Let $\b_r$ denotes the outward pointing unit normal vector to the boundary, and $dr$ the correspondent one form. Near the boundary we have the collar decomposition
$C_\alpha S^n_{l\sin\alpha}=[0,-\epsilon)\times  S^n_{l\sin\alpha}$, and if $y$ is a system of local coordinates on the boundary, then $x=(r,y)$ is a local system of coordinates in $C_\alpha S^n_{l\sin\alpha}$. The smooth forms on $C_\alpha S^n_{l\sin\alpha}$ near the boundary decompose as
\[
\omega=\omega_{\rm tan}+\omega_{\rm norm},
\]
where $\omega_{\rm norm}$ is the orthogonal projection on the subspace generated by $dr$, and $\omega_{\rm tan}$ is in $\Lambda  S^n_{l\sin\alpha}$. We  write
\[
\omega=\omega_1+ \omega_{2}\wedge dr,
\]
where $\omega_j\in C^\infty(C_\alpha S^n_{l\sin\alpha})\otimes \Lambda  S^n_{l\sin\alpha}$, and
\[
*\omega_2=dr \wedge *\omega.
\]

Define absolute boundary conditions by
\[
B_{\rm abs}(\omega)=\omega_{\rm norm}|_{S^n_{l\sin\alpha}}=\omega_2|_{S^n_{l\sin\alpha}}=0,
\]
and relative boundary conditions by
\[
B_{\rm rel}(\omega)=\omega_{\rm tan}|_{S^n_{l\sin\alpha}}=\omega_1|_{S^n_{l\sin\alpha}}=0.
\]

Let $\B(\omega)=B(\omega)\oplus B((d+d^\dagger)(\omega))$. Then the operator $\Delta=(d+d^\dagger)^2$ with boundary conditions $\B(\omega)=0$  is self adjoint. Note that $\B$ correspond to
\beq\label{abs}
\B_{\rm abs}(\omega)=0\hspace{20pt}{\rm if~ and~ only~ if}\hspace{20pt}\left\{\begin{array}{l}\omega_{\rm norm}|_{S^n_{l\sin\alpha}}=0,\\
(d\omega)_{\rm norm}|_{S^n_{l\sin\alpha}}=0,\\
       \end{array}
\right.
\eeq
\beq\label{rel}
\B_{\rm rel}(\omega)=0\hspace{20pt}{\rm if~ and~ only~ if}\hspace{20pt}\left\{\begin{array}{l}\omega_{\rm tan}|_{S^n_{l\sin\alpha}}=0,\\
(d^\dagger\omega)_{\rm tan}|_{S^n_{l\sin\alpha}}=0,\\
       \end{array}
\right.
\eeq

\section{The spectrum of the Laplacian on forms}
\label{s2}

In this section we give the spectrum of the Laplacian on forms. The result for $n=1$, and $n=2$ is in \cite{HMS}, Lemmas 3, and 4. Thus we just need to study the case of $n=3$. Decomposing with respect to the projections on the eigenspaces of the restriction of the Laplacian on the section of the cone (i.e with respect to the angular momenta), the definition of an appropriate self adjoint extension of the Laplace operator (on functions) on a cone reduces to the analysis of the boundary values of a singular Sturm Liouville ordinary second order differential equation on the line segment $(0,l]$. The problem was addressed already by Rellich in \cite{Rel}, who parameterized the self adjoint extensions. In particular, it turns out that there are not boundary values (at zero) for the non zero mode of the angular momentum, while a boundary condition is necessary for the zero modes, and the unique self adjoint extension defined by this boundary condition is the maximal extension, corresponding to the Friedrich extension (see \cite{BS2} or \cite{Che2} for the boundary condition). The same argument  works for the Laplacian on forms. However, in the present situation we do not actually need boundary conditions (at zero)  for forms of positive degree, since the middle homology of the section of the cone is trivial (compare with \cite{Che0}).
Since the eigenvalues for relative boundary conditions follows by Hodge duality, we just give the eigenvalues for absolute boundary conditions. 
In the following, we denote by $\{k:\lambda\}$ the set of eigenvalues $\lambda$
with multiplicity $k$.

\begin{lem}\label{eig1} The spectrum of the (Friedrich extension of the) Laplacian operator $\Delta_{C_\alpha S^1_{l\sin\alpha}}^{(q)}$ on $q$-forms with absolute boundary conditions is (where $\nu={\rm csc}\alpha$):
\begin{align*}
\Sp \Delta_{C_\alpha S^1_{l\sin\alpha}}^{(0)}=& \left\{j_{1,k}^2/l^{2}\right\}_{k=1}^{\infty}\cup \left\{2:(j_{\nu n,k}')^2/l^{2}\right\}_{n,k=1}^\infty, \\
\Sp \Delta_{C_\alpha S^1_{l\sin\alpha}}^{(1)}=& \left\{j_{0,k}^2/l^{2}\right\}_{k=1}^{\infty}\cup\left\{j_{1,k}^2/l^{2}\right\}_{k=1}^\infty\cup
\left\{2:j_{\nu n,k}^2/l^{2}\right\}_{n,k=1}^\infty \\ & \cup \left\{2:(j_{\nu n,k}')^2/l^{2}\right\}_{n,k=1}^\infty , \\
\Sp \Delta_{C_\alpha S^1_{l\sin\alpha}}^{(2)}=& \left\{j_{0,k}^2/l^{2}\right\}_{k=1}^\infty\cup \left\{2:j_{\nu n,k}^2/l^{2}\right\}_{n,k=1}^\infty. \\
\end{align*}

\end{lem}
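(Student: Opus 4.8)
The strategy is to separate variables and reduce the Laplacian on the cone $C_\alpha S^1_{l\sin\alpha}$ to a family of singular Sturm--Liouville problems on $(0,l]$, indexed by the angular momenta $n\ge 0$ on the circle $S^1_{l\sin\alpha}$. First I would recall that $S^1$ is one-dimensional, so forms on it are only of degree $0$ and $1$, and the eigenfunctions of $\Delta_{S^1}^{(0)}$ are $\e^{\pm \i n\theta_1}$ with eigenvalue $(n/(l\sin\alpha))^2 = n^2\nu^2/l^2\cdot(1/l^2)\cdot l^2$... more precisely the relevant combination that enters the Bessel order is $\mu_n = \nu n$ (with $\nu=\csc\alpha$), since the section has radius $l\sin\alpha$ and the metric is $dr\otimes dr + r^2\sin^2\alpha\, g_{S^1}$. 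Plugging a form $\omega = f(r)\,\phi(\theta_1) + g(r)\,\phi(\theta_1)\wedge dr$ (with $\phi$ an eigenform on the circle) into $\Delta\omega=\lambda\omega$ yields, after the standard conjugation by a power of $r$, a Bessel-type equation whose solutions regular at $r=0$ are $r^{\pm 1/2}J_{\mu}(\sqrt\lambda r)$ for the appropriate order $\mu$ depending on $q$ and $n$; this is exactly the content of Cheeger's functional calculus in \cite{Che0}, and for $n=1$ it is already in \cite{HMS}, so strictly speaking this lemma is a restatement and only a careful bookkeeping is required.

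The key bookkeeping steps are: (i) list the orders $\mu$ that occur for each form degree $q\in\{0,1,2\}$ — for $q=0$ one gets order $1$ from the $n=0$ mode (because $\nu\cdot 0=0$ gets shifted to $1$ after the $r^{1/2}$-conjugation on the interval, giving the zero $j_{1,k}$) and order $\nu n$ with multiplicity $2$ from the modes $n\ge 1$; for $q=2$ the Hodge-$*$ on the cone exchanges the roles and one gets order $0$ from the $n=0$ mode and order $\nu n$ (multiplicity $2$) from $n\ge 1$; for $q=1$ one assembles both families plus the cross terms, landing on orders $0,1$ and $\nu n$, $(\nu n)$ again with the stated multiplicities; (ii) translate the two boundary conditions \eqref{abs} at $r=l$ into the vanishing of either $J_\mu(\sqrt\lambda l)$ (giving zeros $j_{\mu,k}$) or of a combination reducing to $J_\mu'(\sqrt\lambda l)$ (giving $j_{\mu,k}'$) — for $n=1$ the conditions decouple cleanly because the section is a single circle; (iii) check the behaviour at the cone tip $r=0$, where the point made in Section~\ref{s2} applies: no boundary condition is needed for $n\ge 1$ since those solutions are automatically square-integrable, while for $n=0$ the Friedrich (maximal) extension selects $J_1$ rather than the singular solution, and in degree $1$ and $2$ the relevant middle-degree issue is absent because $H^{0}(S^1)$ contributes the $J_0$/$J_1$ families consistently with absolute conditions.

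I would organize the proof as three short paragraphs, one per form degree, each verifying the orders, multiplicities, and whether the zero is of $J_\mu$ or $J_\mu'$ against the explicit boundary operator $B_{\rm abs}$ of \eqref{abs}, and confirming consistency with Hodge duality between $q=0$ and $q=2$ and self-duality-up-to-sign in the middle degree $q=1$ (the alternating-sum check $\Sp\Delta^{(0)}\ominus\Sp\Delta^{(1)}\oplus\Sp\Delta^{(2)}$ should reproduce only the harmonic/topological contribution, which is trivial here since the cone is contractible). The main obstacle is not conceptual — it is getting the index shift right, i.e.\ why the $n=0$ angular mode produces Bessel order exactly $1$ in degree $0$ (and order $0$ in degree $2$) after the conjugation that turns the radial operator on $(0,l]$ with measure $r\,dr$ into a standard Bessel operator; this is where the factor $r^{1/2}$ and the term $\frac{1}{4r^2}$ versus $\frac{\mu^2-1/4}{r^2}$ must be tracked exactly, and where an off-by-one error would be easy to make. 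Once that is pinned down for $n=1$, the rest is direct substitution, and the lemma follows; in fact, as noted, for $n=1$ one may simply cite \cite{HMS}, Lemma~3, so the present statement needs only that citation together with the remark that absolute boundary conditions at $r=l$ are the ones producing the displayed mix of $j_{\mu,k}$ and $j_{\mu,k}'$.
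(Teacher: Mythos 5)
Your overall route is the same as the paper's: the paper gives no independent proof of this lemma at all, but simply cites \cite{HMS}, Lemmas 3 and 4, for the cases $n=1,2$, and the separation-of-variables/Cheeger-eigenform argument with the boundary operator (\ref{abs}) that you sketch is exactly the method it uses for the only case it does prove ($S^3$). So the citation fallback you mention at the end is literally the paper's proof, and your plan of checking orders, multiplicities, and the $J_\mu$ versus $J'_\mu$ dichotomy degree by degree is the right bookkeeping.

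However, the step you yourself single out as the crux is explained incorrectly. There is no ``index shift'': for the $n=0$ angular mode in degree $0$ the radial problem is $-f''-\frac{1}{r}f'=\lambda f$, a Bessel equation of order $0$, and the substitution $u=r^{1/2}f$ only changes the potential to $-\frac{1}{4r^2}$, never the order; the solution regular at the tip is $J_0(\sqrt{\lambda}\,r)$ in both degree $0$ and degree $2$. The zeros $j_{1,k}$ in degree $0$ arise purely from the boundary condition at $r=l$: absolute conditions on functions are Neumann, so $J_0'(\sqrt{\lambda}\,l)=0$, and since $J_0'=-J_1$ the positive solutions are $\sqrt{\lambda}\,l=j_{1,k}$ (equivalently $j'_{0,k}=j_{1,k}$); in degree $2$ the dual relative/Dirichlet condition gives $J_0(\sqrt{\lambda}\,l)=0$, i.e.\ $j_{0,k}$. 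Note that your version is internally inconsistent: if the order really were $1$ for the $n=0$ mode in degree $0$, the Neumann condition dictated by (\ref{abs}) would produce $j'_{1,k}$, not $j_{1,k}$, so following your mechanism literally would not reproduce the stated spectrum. Once this point is corrected (or the whole statement is delegated to \cite{HMS}, Lemma 3, as the paper does), the rest of your outline --- orders $\nu n$ with multiplicity $2$ for $n\geq 1$, Neumann giving $(j'_{\nu n,k})$ and Dirichlet giving $j_{\nu n,k}$, and $\Sp_+\Delta^{(1)}=\Sp_+\Delta^{(0)}\cup\Sp_+\Delta^{(2)}$ by contractibility --- is sound.
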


\begin{lem}\label{eig2}
The spectrum of the (Friedrich extension of the) Laplacian operator $\Delta_{C_{\alpha} S^2_{l\sin\alpha}}^{(q)}$ on $q$-forms with absolute boundary conditions is:
\begin{align*}
\Sp \Delta_{C_{\alpha} S^2_{l\sin\alpha}}^{(0)}=& \left\{(2n+1): \hat j_{\mu_n,k,-}^2/l^{2}\right\}_{n,k=1}^{\infty}
\cup \left\{j_{\frac{3}{2},k}^2/l^{2}\right\}_{k=1}^\infty, \\
\Sp \Delta_{C_{\alpha} S^2_{l\sin\alpha}}^{(1)}=& \left\{j_{\frac{3}{2},k}^2/l^{2}\right\}_{k=1}^\infty\cup
\left\{(2n+1):j_{\mu_n, k}^2/l^{2}\right\}_{n,k=1}^\infty\\
&\cup\left\{(2n+1):\hat j_{\mu_n,k,+}^2/l^{2}\right\}_{n,k=1}^\infty\cup
\left\{(2n+1):\hat j_{\mu_n,k,-}^2/l^{2}\right\}_{n,k=1}^\infty,\\
\Sp \Delta_{C_{\alpha} S^2_{l\sin\alpha}}^{(2)}=& \left\{j_{\frac{1}{2},k}^2/l^{2}\right\}_{k=1}^\infty\cup
\left\{(2n+1):j_{\mu_n, k}^2/l^{2}\right\}_{n,k=1}^\infty\\
&\cup\left\{(2n+1):\hat j_{\mu_n,k,+}^2/l^{2}\right\}_{n,k=1}^\infty\cup
\left\{(2n+1):j_{\mu_n,k}^2/l^{2}\right\}_{n,k=1}^\infty, \\
\Sp \Delta_{C_{\alpha} S^2_{l\sin\alpha}}^{(3)}=& \left\{(2n+1):j_{\mu_n,k}^2/l^{2}\right\}_{n,k=1}^{\infty} \cup
\left\{j_{\frac{1}{2},k}^2/l^{2}\right\}_{k=1}^\infty ,\\
\end{align*}
where $\mu_n=\sqrt{\nu^2 n(n+1)+\frac{1}{4}}$, and where the $\hat j_{\nu,k,\pm}$ are the zeros of the function $G^{\pm}_{\nu}(z)=\pm\frac{1}{2}J_{\nu}(z)+zJ'_\nu(z)$.
\end{lem}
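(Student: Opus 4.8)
Plan for the proof of Lemma~\ref{eig2} (the spectrum of the Laplacian on forms on $C_\alpha S^2_{l\sin\alpha}$).

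The plan is to follow the general scheme described in the introduction: separate variables in the cone coordinate $(r,y)$, reduce the eigenvalue problem for $\Delta^{(q)}$ to a family of Bessel-type ODEs on $(0,l]$ indexed by the eigenvalues of the Laplacian on forms on the section $S^2_{l\sin\alpha}$, and then impose absolute boundary conditions at $r=l$ together with the Friedrich condition at $r=0$. First I would recall, from Cheeger's functional calculus \cite{Che0}, the decomposition of a $q$-form on the cone into ``tangential'' and ``normal'' pieces built from coclosed $p$-forms on the section; concretely, an eigenform of $\Delta^{(q)}$ restricted to an eigenspace of $\Delta_{S^2}^{(p)}$ with eigenvalue $\lambda_p$ has radial part solving a Bessel equation of order $\nu_p$, where $\nu_p$ is the explicit function of $\lambda_p$ coming from the cone metric $dr^2+r^2a^2 g_{S^2}$ (the factor $a=\sin\alpha=1/\nu$ enters precisely here, rescaling $\lambda_p \mapsto \nu^2\lambda_p$ inside the order). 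Since $S^2$ has $H^0,H^2\neq 0$ but $H^1=0$, only the degree-$0$ and degree-$2$ sectors produce ``harmonic'' radial modes needing the Friedrich condition; for all other sectors the solution regular at $0$ is automatically selected, as explained in the paragraph before the lemma.

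Next I would write down the eigenvalues $\lambda_p$ of $\Delta_{S^2}^{(p)}$ explicitly: on $S^2$ (radius $1$), $\Delta^{(0)}$ has eigenvalues $n(n+1)$, $n\geq 0$, with multiplicity $2n+1$; the coexact $1$-forms have eigenvalues $n(n+1)$, $n\geq 1$, with multiplicity $2n+1$; and $\Delta^{(2)}$ is dual to $\Delta^{(0)}$. Feeding these into the order function I would get $\mu_n=\sqrt{\nu^2 n(n+1)+\tfrac14}$ for the sectors built on degree-$0$ data (the $+\tfrac14=(\tfrac{n+1}{2})^2-(\tfrac n2)^2$ shift is the standard one for the cone Laplacian on $S^2$, which for the bottom mode $n=0$ gives order $\tfrac12$ on functions — hence the $j_{\frac12,k}$ and $j_{\frac32,k}$ terms), and analogous orders for the other sectors. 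The boundary conditions then translate, mode by mode, into: Dirichlet-type conditions giving zeros $j_{\mu,k}$ of $J_\mu$; Neumann/Robin-type conditions giving zeros $j'_{\mu,k}$ or, more precisely for the mixed tangential/normal components, zeros of the combination $G^\pm_\nu(z)=\pm\tfrac12 J_\nu(z)+zJ'_\nu(z)$, whose zeros are denoted $\hat\jmath_{\nu,k,\pm}$. Matching these sector by sector against the absolute boundary conditions \eqref{abs} gives the four stated lists, with the multiplicities $2n+1$ inherited from the section and the $\{j_{\frac32,k}\}$, $\{j_{\frac12,k}\}$ pieces coming from the $n=0$ harmonic sector on forms of degree $0,1$ and $2,3$ respectively.

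The main obstacle, and the only genuinely nontrivial part, is the careful bookkeeping of which linear combination of Bessel functions arises in each degree $q$ and each section-degree $p$ after imposing $\B_{\rm abs}$. One has to track how $d$ and $d^\dagger$ act across the tangential/normal splitting on the cone, how this couples the radial factors of the $\omega_1$ and $\omega_2$ components, and then verify that the absolute boundary operator \eqref{abs} — i.e. $\omega_{\rm norm}=0$ and $(d\omega)_{\rm norm}=0$ at $r=l$ — produces exactly $J_\mu$-type conditions in some slots and $G^\pm_\nu$-type conditions in others, with no modes lost or double counted (in particular checking that the Hodge-star/Poincaré–Lefschetz duality between $q$ and $3-q$ is visibly respected by the four lists, which is a useful consistency check). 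Since the cases $n=1$ and $n=2$ already appear in \cite{HMS}, I would organize the computation to parallel \cite{HMS} Lemma~4 as closely as possible, treating the new degree-of-freedom (an extra angular variable, hence $\Delta^{(q)}$ now acting up to $q=3$) as the incremental step, and would present the separation-of-variables computation in enough detail to pin down the orders $\mu_n$ and the functions $G^\pm_\nu$, relegating the routine solution of each boundary-value problem to a remark.
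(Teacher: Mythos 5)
Your plan is essentially the paper's own route: the paper disposes of this lemma by citing \cite{HMS}, Lemma 4, but its proof of the analogous statement for $S^3$ (Lemma \ref{eig3}) proceeds exactly as you propose — Cheeger's description of the eigenforms on the cone, the known eigenvalues of (coexact) forms on the section, and the absolute boundary conditions at $r=l$ translated mode by mode into vanishing of $J_\mu$, $J'_\mu$, or the combinations $G^{\pm}_{\mu}$, with the harmonic sector of the section producing the $j_{\frac12,k}$ and $j_{\frac32,k}$ modes. Only a cosmetic slip: the shift inside $\mu_n$ is the fixed constant $\bigl(\tfrac{m-1}{2}-q\bigr)^2=\tfrac14$ coming from the radial factor $r^{-1/2}$, not $(\tfrac{n+1}{2})^2-(\tfrac{n}{2})^2$ as your parenthetical suggests, though the $\mu_n$ you state is the correct one.
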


\begin{lem}\label{eig3}
The spectrum of the (Friedrich extension of the) Laplacian operator $\Delta_{C_{\alpha} S^3_{l\sin\alpha}}^{(q)}$ on
$q$-forms with absolute boundary conditions is:
\begin{align*}
\Sp \Delta_{C_{\alpha} S^3_{l\sin\alpha}}^{(0)}=& \left\{j_{2,k}^2/l^{2}\right\}_{k=1}^\infty \cup \left\{(n+1)^2:
\tilde j_{\mu_{0,n},k,-}^2/l^{2}\right\}_{n,k=1}^{\infty}, \\
\Sp \Delta_{C_{\alpha} S^3_{l\sin\alpha}}^{(1)}=& \left\{j_{2}^2/l^{2}\right\}_{k=1}^\infty\cup
\left\{2n(n+2):(j'_{\mu_{1,n}, k})^2/l^{2}\right\}_{n,k=1}^\infty\\
&\cup\left\{(n+1)^2:\tilde j_{\mu_{0,n},k,-}^2/l^{2}\right\}_{n,k=1}^\infty\cup
\left\{(n+1)^2: j_{\mu_{0,n},k}^2/l^{2}\right\}_{n,k=1}^\infty,\\
\Sp \Delta_{C_{\alpha} S^3_{l\sin\alpha}}^{(2)}=& \left\{(n+1)^2: \tilde
j_{\mu_{0,n},k,+}^2/l^{2}\right\}_{n,k=1}^\infty \cup
\left\{2n(n+2):(j'_{\mu_{1,n}, k})^2/l^{2}\right\}_{n,k=1}^\infty\\
&\cup\left\{2n(n+2): j_{\mu_{1,n},k}^2/l^{2}\right\}_{n,k=1}^\infty\cup
\left\{(n+1)^2: j_{\mu_{0,n},k}^2/l^{2}\right\}_{n,k=1}^\infty, \\
\Sp \Delta_{C_{\alpha} S^3_{l\sin\alpha}}^{(3)}=& \left\{j_{1}^2/l^{2}\right\}_{k=1}^\infty\cup
\left\{(n+1)^2:\tilde j_{\mu_{0,n}, k,+}^2/l^{2}\right\}_{n,k=1}^\infty\\
&\cup\left\{(n+1)^2: j_{\mu_{0,n},k}^2/l^{2}\right\}_{n,k=1}^\infty\cup
\left\{2n(n+2): j_{\mu_{1,n},k}^2/l^{2}\right\}_{n,k=1}^\infty,\\
\Sp \Delta_{C_{\alpha} S^3_{l\sin\alpha}}^{(4)}=& \left\{j_{1,k}^2/l^{2}\right\}_{k=1}^\infty \cup \left\{(n+1)^2:
\tilde j_{\mu_{0,n},k}^2/l^{2}\right\}_{n,k=1}^{\infty},
\end{align*}
where 
\[
\mu_{0,n}=\sqrt{\nu^2 n(n+2)+1}, \qquad \mu_{1,n} = \nu(n+1),
\]
and where the $\tilde j_{\nu,k,\pm}$ are the zeros of the function $T^{\pm}_{\nu}(z)=\pm J_{\nu}(z)+zJ'_\nu(z)$.
\end{lem}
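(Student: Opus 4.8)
\textbf{Proof proposal for Lemma \ref{eig3}.}

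The plan is to follow the same strategy used in \cite{HMS} for $n=1,2$, now applied to $S^3$. First I would recall Cheeger's functional calculus \cite{Che0}: on the cone $X_\alpha=(0,l]\times S^3_{l\sin\alpha}$ the Laplacian on $q$-forms preserves the decomposition into the contributions coming from coclosed eigenforms on the section $S^3$, and from their exterior derivatives and the wedges with $dr$. Concretely, writing a $q$-form on the cone as a combination of $\phi$, $d\phi$, $\phi\wedge dr$, $d\phi\wedge dr$ with $\phi$ a coclosed $p$-form eigenform on $S^3$ (for $p=q,q-1$), the eigenvalue equation becomes, after the substitution separating the radial variable, a Bessel-type equation $f''+\frac1r f'+(\lambda-\mu^2/r^2)f=0$. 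The order $\mu$ is determined by the eigenvalue of the sectional Laplacian together with the degree shift, exactly as in \cite{Che0}, (3.?). So the first step is the bookkeeping: list the coclosed spectrum of the Laplacian on $p$-forms on the round $S^3$ of radius $\sin\alpha$ (equivalently multiply the unit-sphere spectrum by $\nu^2=\csc^2\alpha$), with multiplicities, for $p=0,1$ (forms of degree $2,3$ on $S^3$ follow by Hodge $*$). The harmonic analysis on $S^3$ gives: coclosed $0$-forms with eigenvalue $n(n+2)$, $n\geq 0$, multiplicity $(n+1)^2$; coclosed $1$-forms with eigenvalue $(n+1)^2-1=n(n+2)$ on the unit sphere, multiplicity $2n(n+2)$, $n\geq 1$ (these are the coexact $1$-forms), plus the subtlety that $H^*(S^3)$ is nontrivial only in degrees $0,3$. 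Rescaling by $\nu^2$ and adding the degree-dependent shift produces the orders $\mu_{0,n}=\sqrt{\nu^2n(n+2)+1}$ and $\mu_{1,n}=\nu(n+1)$ appearing in the statement.

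The second step is to impose the boundary conditions. Square-integrability at $r=0$ selects the Bessel function of the first kind $J_\mu$ (no boundary condition is needed there for forms of positive degree, since the middle-degree cohomology of $S^3$ vanishes — as remarked in Section \ref{s2} — and for the zero mode one takes the Friedrich extension; this is the only place a boundary choice enters, yielding $J$ with the larger index). At $r=l$ one translates $\B_{\rm abs}(\omega)=0$, i.e. \eqref{abs}, into conditions on the radial functions. For the ``interior'' components one gets the Dirichlet-type condition $J_\mu(\sqrt\lambda\, l)=0$, giving the zeros $j_{\mu,k}$. For the components involving $dr$ one differentiates and obtains a Neumann-type or mixed condition: this is where the functions $T^\pm_\nu(z)=\pm J_\nu(z)+zJ'_\nu(z)$ enter, their zeros being the $\tilde j_{\nu,k,\pm}$. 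The precise combination $\pm J_\nu+zJ'_\nu$ (as opposed to the $\pm\frac12 J_\nu+zJ'_\nu$ of Lemma \ref{eig2}) comes from the dimension-dependent factor $r^{(n-2p)/2}$ (with $n=3$) in Cheeger's separation ansatz; carefully tracking that exponent through $d$ and $*$ on the cone is what fixes the $\pm 1$. I would do this computation degree by degree, $q=0,1,2$, and then read off $q=3,4$ by the Hodge duality $\Delta^{(q)}\cong\Delta^{(n+1-q)}$ with absolute $\leftrightarrow$ relative boundary conditions swapped.

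The third step is the multiplicity bookkeeping and assembling the final lists. Each coclosed $p$-eigenform $\phi$ on $S^3$ contributes, via $\phi, d\phi, \phi\wedge dr, d\phi\wedge dr$, to several form-degrees on the cone; one must be careful not to double-count $d\phi$ when $\phi$ is already exact, and to separate the harmonic directions (which give the Bessel functions of half-integer or integer order $j_{2,k}$, $j_{1,k}$ corresponding to $\mu=2$ from $n=0$ in the $0$-form tower on the $3$-sphere, etc.). Matching the multiplicities $(n+1)^2$ and $2n(n+2)$ to the dimensions of the coexact eigenspaces completes the proof. The main obstacle I expect is precisely the third step combined with the exponent tracking in step two: getting every multiplicity and every $\pm$ sign right across five form-degrees is delicate, and the only real check is internal consistency (Poincaré--Lefschetz duality of the five spectra, and agreement of the small-$n$ terms with direct computation on the disc $D^4=C_{\pi/2}S^3$). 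Since the analogous $n=1,2$ results are already established in \cite{HMS}, I would lean on that template heavily and present mainly the new $S^3$ representation-theoretic input and the modified Bessel boundary functions $T^\pm_\nu$.
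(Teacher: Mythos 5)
Your plan follows essentially the same route as the paper: Cheeger's separation of variables on the cone, the coexact spectrum of the Laplacian on $S^3$ from Ikeda--Taniguchi, substitution of the explicit eigenforms into the absolute boundary conditions at $r=l$ to produce the zeros $j_\mu$, $j'_\mu$ and the zeros $\tilde j_{\mu,k,\pm}$ of $T^\pm_\mu$, and multiplicity bookkeeping. There is, however, a concrete error in your key input: on the unit $S^3$ the Hodge Laplacian on coexact $1$-forms has eigenvalues $(n+1)^2$ with multiplicity $2n(n+2)$, not $(n+1)^2-1=n(n+2)$ as you write. Since for a $3$-dimensional section the degree shift entering the Bessel order of a coexact $p$-eigenform is $\left(p-\frac{3-1}{2}\right)^2$, which vanishes for $p=1$, your value of the sectional eigenvalue would give $\mu_{1,n}=\nu\sqrt{n(n+2)}$ rather than the correct $\mu_{1,n}=\nu(n+1)$; the order you quote agrees with the statement only because you asserted it, not because it follows from your bookkeeping. (For $p=0$ the shift is $1$ and your $n(n+2)$ is correct, which is why $\mu_{0,n}=\sqrt{\nu^2 n(n+2)+1}$ does come out right.) Since these orders, together with the $\pm$ in $T^\pm_\mu$, are precisely the content of the lemma, this must be corrected for the proof to go through.

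A smaller point concerns your shortcut for $q=3,4$: on a manifold with boundary the star operator exchanges absolute and relative conditions, so $\Sp\Delta^{(q)}_{\rm abs}=\Sp\Delta^{(m-q)}_{\rm rel}$, and obtaining the absolute spectra in degrees $3,4$ by duality would still require computing the relative spectra in degrees $0,1$ — no less work than treating $q=3,4$ directly, which is what the paper does (it writes the absolute conditions in coordinates for all five degrees and substitutes Cheeger's eigenforms $x^{-1}J_{\mu_{0,n}}(\lambda x)$, $\partial_x\bigl(x^{\pm1}J_{\mu}(\lambda x)\bigr)$, etc., noting for instance that the constant-mode tower reduces to $J_2$ via the recurrence $-J_1(z)+zJ_1'(z)=-zJ_2(z)$). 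Duality is better used, as you also suggest, only as a consistency check on the five resulting lists.
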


\begin{proof} Recall we parameterize $C_{\alpha}S^{3}_{l\sin\alpha}$ by
\[C_{\alpha}S^2_{l\sin\alpha}=\begin{cases}
x_1=x\sin{\alpha}\sin{\theta_3}\sin{\theta_2}\cos{\theta_1} \\[8pt]
x_2=x\sin{\alpha}\sin{\theta_3}\sin{\theta_2}\sin{\theta_1} \\[8pt]
x_3=x\sin{\alpha}\sin{\theta_3}\cos{\theta_2} \\[8pt]
x_4=x\sin{\alpha}\cos{\theta_3} \\[8pt]
x_5=x\cos{\alpha}
\end{cases}\] where $(x,\theta_1,\theta_2,\theta_{3})\in [0,l]\times[0,2\pi]\times[0,\pi]\times[0,\pi]$, $0<\alpha\leq \pi/2$ is
a fixed real number and $0< a=\sin\alpha\leq 1$. The induced metric is (for $x>0$)
\[
g = dx\otimes dx + (a^2x^2\sin^2\theta_2\sin^2\theta_3) d\theta_1\otimes d\theta_1 + (a^2x^2\sin^2\theta_3) d\theta_2 \otimes
     d\theta_2 + (a^2x^2) d\theta_3 \otimes d\theta_3.
\]

Using the absolute boundary conditions on forms described in equation (\ref{abs}) 
of the previous section, we obtain the following equations. For the $0$-forms: 
\beq\label{abs0S3}
\begin{aligned}
{\rm abs. }&: \partial_x \omega(l,\theta_1,\theta_2,\theta_3)=0.
\end{aligned}
\eeq

For the $1$-forms: \beq\label{abs1S3}
\begin{aligned}
{\rm abs.}&:
    \left\{\begin{array}{lll} \omega_x (l,\theta_1,\theta_2,\theta_3) = 0 \\
    \partial_x \omega_{\theta_1}(l,\theta_1,\theta_2,\theta_3)=0 \\
    \partial_x\omega_{\theta_2}(l,\theta_1,\theta_2,\theta_3)=0 \\
    \partial_x\omega_{\theta_3}(l,\theta_1,\theta_2,\theta_3)=0.\end{array}\right.
\end{aligned}
\eeq

For the $2$-forms, with $i=1,2,3$: \beq\label{abs2S3}
\begin{aligned}
{\rm abs.}&:
    \left\{\begin{array}{ll} \omega_{x\theta_i} (l,\theta_1,\theta_2,\theta_3) = 0 \\
    \partial_x\omega_{\theta_1\theta_2}(l,\theta_1,\theta_2,\theta_3)=0 \\
    \partial_x\omega_{\theta_1\theta_3}(l,\theta_1,\theta_2,\theta_3)=0 \\
    \partial_x\omega_{\theta_2\theta_3}(l,\theta_1,\theta_2,\theta_3)=0.\end{array}\right.
\end{aligned}
\eeq

For the $3$-forms: \beq\label{abs3S3}
\begin{aligned}
{\rm abs.}&:
    \left\{\begin{array}{lll} \omega_{x\theta_1\theta_2} (l,\theta_1,\theta_2,\theta_3) = 0 \\
    \omega_{x\theta_1\theta_3}(l,\theta_1,\theta_2,\theta_3)=0 \\
    \omega_{x\theta_2\theta_3}(l,\theta_1,\theta_2,\theta_3)=0 \\
    \partial_x\omega_{\theta_1\theta_2\theta_3}(l,\theta_1,\theta_2,\theta_3)=0.\end{array}\right.
\end{aligned}
\eeq

For the $4$-forms: \beq\label{abs4S3}
\begin{aligned}
{\rm abs. }&: \omega_{x\theta_1\theta_2\theta_3}(l,\theta_1,\theta_2,\theta_3)=0.
\end{aligned}
\eeq

Next we use the description of the eigenfunctions given in Section 3 of \cite{Che2} to determine the eigenvalues. By \cite{IT} the eigenvalues of the coexact forms of the Laplacian over $S^3$ are, with $n\geq 1$:

\begin{center}

\begin{table}[htb]
\centering
\begin{tabular}{|c|c|c|}
\hline Dimension & Eigenvalue & Multiplicity   \\
\hline \hline
$0$ & $n(n+2)$ & $(n+1)^2$  \\
\cline{1-3}
$1$ & $(n+1)^2$ & $2 n(n+2)$   \\
\cline{1-3} $2$ & $n(n+2)$ &  $(n+1)^2$\\
\hline
\end{tabular}
\end{table}

\end{center}

And by \cite{Che2} we have $\mu_{0,n} = \mu_{2,n} = \sqrt{\nu^2 n(n+2) +1 }$ and $\mu_{1,n} = \nu(n+1)$, and the
eigenforms of the Laplacian of $C_{\alpha}S^{3}_{la}$ are as follows. For the $0$-forms:
\begin{equation*}
\alpha^{(0)}_{n} = x^{-1} J_{\mu_{0,n}}(\lambda x) \phi^{(0)}_{n}(\theta_1,\theta_2,\theta_3), \qquad E^{(0)} = x^{-1}
J_{1}(\lambda x) h^{0}(\theta_1,\theta_2,\theta_3).
\end{equation*}

For the $1$-forms:
\begin{align*}
\alpha^{(1)}_{n} &= x^{-1} J_{\mu_{1,n}}(\lambda x) \phi^{(1)}_{n}(\theta_1,\theta_2,\theta_3),\\
\beta^{ (1)}_{n} &= x^{-1} J_{\mu_{0,n}}(\lambda x)d\phi^{(0)}_{n}(\theta_1,\theta_2,\theta_3) + \partial_x(x^{-1}
J_{\mu_{0,n}}(\lambda x))dx\wedge
\phi^{(0)}_{n}(\theta_1,\theta_2,\theta_3),\\
\gamma^{ (1)}_{n} &= x^{-1} \partial_x(x J_{\mu_{0,n}}(\lambda x)) d\phi^{(0)}_{n}(\theta_1,\theta_2,\theta_3) +x^{-2}
J_{\mu_{0,n}}(\lambda x) dx\wedge \tilde \delta \tilde d \phi^{(0)}_{n}(\theta_1,\theta_2,\theta_3),\\
D^{(1)} &= \partial_x(x^{-1} J_1(\lambda x)) dx \wedge h^{(0)}(\theta_1,\theta_2,\theta_3)
\end{align*}

For the $2$-forms:
\begin{align*}
\alpha^{(2)}_{n} &= x J_{\mu_{0,n}}(\lambda x) \phi^{(2)}_{n}(\theta_1,\theta_2,\theta_3),\\
\beta^{ (2)}_{n} &=  J_{\mu_{1,n}}(\lambda x)d\phi^{(1)}_{n}(\theta_1,\theta_2,\theta_3) +
\partial_x(J_{\mu_{1,n}}(\lambda x))dx\wedge\phi^{(2)}_{n}(\theta_1,\theta_2,\theta_3),\\
\gamma^{ (2)}_{n} &= x \partial_x(J_{\mu_{1,n}}(\lambda x)) d\phi^{(1)}_{n}(\theta_1,\theta_2,\theta_3) +x^{-1}
J_{\mu_{1,n}}(\lambda x) dx\wedge \tilde \delta \tilde d \phi^{(1)}_{n}(\theta_1,\theta_2,\theta_3),\\
\delta^{(2)}_n &= J_{\mu_{0,n}}(\lambda x) dx \wedge d\phi^{(0)}_{n}(\theta_1,\theta_2,\theta_3).
\end{align*}

For the $3$-forms:
\begin{align*}
\beta^{ (3)}_{n} &= x J_{\mu_{0,n}}(\lambda x)d\phi^{(2)}_{n}(\theta_1,\theta_2,\theta_3) +
\partial_x(x J_{\mu_{0,n}}(\lambda x))dx\wedge\phi^{(2)}_{n}(\theta_1,\theta_2,\theta_3),\\
\gamma^{ (3)}_{n} &= x^{3} \partial_x(x^{-1} J_{\mu_{0,n}}(\lambda x)) d\phi^{(2)}_{n}(\theta_1,\theta_2,\theta_3) +
J_{\mu_{0,n}}(\lambda x) dx\wedge \tilde \delta \tilde d \phi^{(2)}_{n}(\theta_1,\theta_2,\theta_3),\\
\delta^{(3)}_n &= x J_{\mu_{1,n}}(\lambda x) dx \wedge d\phi^{(1)}_{n}(\theta_1,\theta_2,\theta_3),\\
E^{(3)} & = x^2 J_{2}(\lambda x)h^{3}(\theta_1,\theta_2,\theta_3).
\end{align*}

For $4$-forms:
\begin{equation*}
\delta^{(4)}_{n} = x^{2} J_{\mu_{0,n}}(\lambda x) dx \wedge d\phi^{(2)}_{n}(\theta_1,\theta_2,\theta_3), \qquad D^{(4)}
=\partial_x( x^{2} J_{2}(\lambda x)) dx \wedge h^{3}(\theta_1,\theta_2,\theta_3).
\end{equation*}

Where the $\phi^{(i)}_n(\theta_1,\theta_2,\theta_3)$, for $i=0,1,2$,  are coexact eigenforms of the Laplacian on $S^{3}$, and $h^{(0)}(\theta_1,\theta_2,\theta_3)$, and $h^{(3)}(\theta_1,\theta_2,\theta_3)$ are harmonic forms of the Laplacian on  $S^{3}$. Using these functions in the boundary conditions given in equation (\ref{abs}), we obtain the result.

\end{proof}

\section{Zeta determinants for some class of double sequences}
\label{s3}

We give in this section all the tools necessary in order to evaluate the zeta determinants appearing in the calculation of the analytic torsion. This is based on \cite{Spr3} \cite{Spr4} \cite{Spr5} and \cite{Spr9}. We present here a simplified version of the main result of those works (see in particular the  general formulation in Theorem 3.9 of \cite{Spr9} or the Spectral Decomposition Lemma of \cite{Spr5}), that is sufficient for our purpose here.

Let $S=\{a_n\}_{n=1}^\infty$ be a sequence of non vanishing complex numbers, ordered by increasing modules, with the unique point of accumulation at infinite. The positive real number (possibly infinite)
\[
s_0={\rm limsup}_{n\to\infty} \frac{\log n}{\log |a_n|},
\]
is called the exponent of convergence of $S$, and denoted by $\ec(S)$. We are only interested in sequences with  $\ec(S)=s_0<\infty$. If this is the case, then there exists a least integer $p$ such that the series $\sum_{n=1}^\infty a_n^{-p-1}$ converges absolutely. We assume $s_0-1< p\leq s_0$, we call the integer $p$ the genus of the sequence $S$, and we write $p=\ge(S)$.  We define the zeta function associated to $S$ by the uniformly convergent series
\[
\zeta(s,S)=\sum_{n=1}^\infty a_n^{-s},
\]
when $\Re(s)> \ec(S)$, and by analytic continuation otherwise.  We call the open subset $\rho(S)=\C-S$ of the complex plane the  resolvent set of $S$. For all $\lambda\in\rho(S)$, we define the Gamma function associated to $S$  by the canonical product
\beq\label{gamma}
\frac{1}{\Gamma(-\lambda,S)}=\prod_{n=1}^\infty\left(1+\frac{-\lambda}{a_n}\right)\e^{\sum_{j=1}^{\ge(S)}\frac{(-1)^j}{j}\frac{(-\lambda)^j}{a_n^j}}.
\eeq

When necessary in order to define the meromorphic branch of an analytic function, the domain for $\lambda$ will be the  open subset $\C-[0,\infty)$ of the complex plane.
We use the notation $\Sigma_{\theta,c}=\left\{z\in \C~|~|\arg(z-c)|\leq \frac{\theta}{2}\right\}$,
with $c\geq \delta> 0$, $0< \theta<\pi$. We use
$D_{\theta,c}=\C-\Sigma_{\theta,c}$, for the complementary (open) domain and $\Lambda_{\theta,c}=\partial \Sigma_{\theta,c}=\left\{z\in \C~|~|\arg(z-c)|= \frac{\theta}{2}\right\}$, oriented counter clockwise, for the boundary.
With this notation, we define now a particular subclass of sequences. Let $S$ be as above, and assume that $\ec(S)<\infty$, and that there exist $c>0$ and $0<\theta<\pi$, such that $S$ is contained in the interior of the sector $\Sigma_{\theta,c}$. Furthermore, assume that the logarithm of the associated Gamma function has a uniform asymptotic expansion for large $\lambda\in D_{\theta,c}(S)=\C-\Sigma_{\theta,c}$ of the following form
\[
\log\Gamma(-\lambda,S)\sim\sum_{j=0}^\infty a_{\alpha_j,0}(-\lambda)^{\alpha_j} +\sum_{k=0}^{\ge(S)} a_{k,1}(-\lambda)^k\log(-\lambda),
\]
where $\{\alpha_j\}$ is a decreasing sequence of real numbers. Then, we say that $S$ is a {\it totally regular sequence of spectral type with infinite order}. We call the open set $D_{\theta,c}(S)$ the asymptotic domain of $S$.

Next, let $S=\{\lambda_{n,k}\}_{n,k=1}^\infty$ be a double sequence of non
vanishing complex numbers with unique accumulation point at the
infinity, finite exponent $s_0=\ec(S)$ and genus $p=\ge(S)$. Assume if necessary that the elements of $S$ are ordered as $0<|\lambda_{1,1}|\leq|\lambda_{1,2}|\leq |\lambda_{2,1}|\leq \dots$. We use the notation $S_n$ ($S_k$) to denote the simple sequence with fixed $n$ ($k$). We call the exponents of $S_n$ and $S_k$ the relative exponents of $S$, and we use the notation $(s_0=\ec(S),s_1=\ec(S_k),s_2=\ec(S_n))$. We define relative genus accordingly.

\begin{defi} Let $S=\{\lambda_{n,k}\}_{n,k=1}^\infty$ be a double
sequence with finite exponents $(s_0,s_1,s_2)$, genus
$(p_0,p_1,p_2)$, and positive spectral sector
$\Sigma_{\theta_0,c_0}$. Let $U=\{u_n\}_{n=1}^\infty$ be a totally
regular sequence of spectral type of infinite order with exponent
$r_0$, genus $q$, domain $D_{\phi,d}$. We say that $S$ is
spectrally decomposable over $U$ with power $\kappa$, length $\ell$ and
asymptotic domain $D_{\theta,c}$, with $c={\rm min}(c_0,d,c')$,
$\theta={\rm max}(\theta_0,\phi,\theta')$, if there exist positive
real numbers $\kappa$, $\ell$ (integer), $c'$, and $\theta'$, with
$0< \theta'<\pi$,   such that:
\begin{enumerate}
\item the sequence
$u_n^{-\kappa}S_n=\left\{\frac{\lambda_{n,k}}{u^\kappa_n}\right\}_{k=1}^\infty$ has
spectral sector $\Sigma_{\theta',c'}$, and is a totally regular
sequence of spectral type of infinite order for each $n$;
\item the logarithmic $\Gamma$-function associated to  $S_n/u_n^\kappa$ has an asymptotic expansion  for large
$n$ uniformly in $\lambda$ for $\lambda$ in
$D_{\theta,c}$, of the following form
\beq\label{exp}
\hspace{30pt}\log\Gamma(-\lambda,u_n^{-\kappa} S_n)=\sum_{h=0}^{\ell}
\phi_{\sigma_h}(\lambda) u_n^{-\sigma_h}+\sum_{l=0}^{L}
P_{\rho_l}(\lambda) u_n^{-\rho_l}\log u_n+o(u_n^{-r_0}),
\eeq
where $\sigma_h$ and $\rho_l$ are real numbers with $\sigma_0<\dots <\sigma_\ell$, $\rho_0<\dots <\rho_L$, the
$P_{\rho_l}(\lambda)$ are polynomials in $\lambda$ satisfying the condition $P_{\rho_l}(0)=0$, $\ell$ and $L$ are the larger integers
such that $\sigma_\ell\leq r_0$ and $\rho_L\leq r_0$.
\end{enumerate}
\label{spdec}
\end{defi}

When a double sequence $S$ is spectrally decomposable over a simple sequence $U$, Theorem 3.9 of \cite{Spr9} gives a formula for the derivative of the associated zeta function at zero. In order to understand such a formula, we need to introduce some other quantities. First, we define the functions
\beq\label{fi1}
\Phi_{\sigma_h}(s)=\int_0^\infty t^{s-1}\frac{1}{2\pi i}\int_{\Lambda_{\theta,c}}\frac{\e^{-\lambda t}}{-\lambda} \phi_{\sigma_h}(\lambda) d\lambda dt.
\eeq

Next, by Lemma 3.3 of \cite{Spr9}, for all $n$, we have the expansions:
\beq\label{form}\begin{aligned}
\log\Gamma(-\lambda,S_n/{u_n^\kappa})&\sim\sum_{j=0}^\infty a_{\alpha_j,0,n}
(-\lambda)^{\alpha_j}+\sum_{k=0}^{p_2} a_{k,1,n}(-\lambda)^k\log(-\lambda),\\
\phi_{\sigma_h}(\lambda)&\sim\sum_{j=0}^\infty b_{\sigma_h,\alpha_j,0}
(-\lambda)^{\alpha_j}+\sum_{k=0}^{p_2} b_{\sigma_h,k,1}(-\lambda)^k\log(-\lambda),
\end{aligned}
\eeq
for large $\lambda$ in $D_{\theta,c}$. We set (see Lemma 3.5 of \cite{Spr9})
\beq\label{fi2}
\begin{aligned}
A_{0,0}(s)&=\sum_{n=1}^\infty \left(a_{0, 0,n} -\sum_{h=0}^\ell
b_{\sigma_h,0,0}u_n^{-\sigma_h}\right)u_n^{-\kappa s},\\
A_{j,1}(s)&=\sum_{n=1}^\infty \left(a_{j, 1,n} -\sum_{h=0}^\ell
b_{\sigma_h,j,1}u_n^{-\sigma_h}\right)u_n^{-\kappa s},
~~~0\leq j\leq p_2.
\end{aligned}
\eeq

We can now state the formula for the derivative at zero of the double zeta function. We give here a modified version of Theorem 3.9 of \cite{Spr9}, more suitable for our purpose here. This is based on the following fact. The key point in the proof of Theorem 3.9 of \cite{Spr9} is the decomposition given in Lemma 3.5 of that paper of the sum
\[
\mathcal{T}(s,\lambda, S,U)=\sum_{n=1}^\infty u_n^{-\kappa s} \log\Gamma(-\lambda, u_n^{-\kappa}S_n),
\]
in two terms: the regular part $\mathcal{P}(s,\lambda,S,U)$ and the remaining singular part. The regular part is obtained subtracting from $\T$ some terms constructed starting from the expansion of the logarithmic Gamma function given in equation (\ref{exp}), namely
\[
\P(s,\lambda,S,u)=\T(s,\lambda, S,U)-\sum_{h=0}^{\ell}
\phi_{\sigma_h}(\lambda) u_n^{-\sigma_h}-\sum_{l=0}^{L}
P_{\rho_l}(\lambda)u_n^{-\rho_l}\log u_n.
\]

Now, assume instead we subtract only the terms such that the zeta function $\zeta(s,U)$ has a pole at $s=\sigma_h$ or at $s=\rho_l$. Let $\hat \P(s,\lambda, S,U)$ be the resulting function. Then the same argument as the one used in Section 3 of \cite{Spr9} in order to prove Theorem 3.9 applies, and we obtain similar formulas for the values of the residue, and of the finite part of the zeta function $\zeta(s,S)$ and of its derivative at zero, with just two differences: first, in the all the sums, all the terms with index $\sigma_h$ such that $s=\sigma_h$ is not a pole of $\zeta(s,U)$ must be omitted; and second, we must substitute the terms $A_{0,0}(0)$ and $A_{0,1}'(0)$, with the finite parts $\Rz_{s=0}A_{0,0}(s)$, and $\Rz_{s=0}A_{0,1}'(s)$. The first modification is an obvious consequence of the substitution of the function $\P$ by the function $\hat \P$. The second modification, follows by the same reason noting that the function $A_{\alpha_j,k}(s)$ defined in Lemma 3.5 of \cite{Spr9} are no longer regular at $s=0$ themselves. However, they both admits a meromorphic extension regular at $s=0$, using the extension of the zeta function $\zeta(s,U)$, and the expansion of the coefficients $a_{\alpha_j,k,n}$ for large $n$.
Thus we have the following result.

\begin{theo} \label{tt} The formulas of Theorem 3.9 of \cite{Spr9} hold if all the quantities with index $\sigma_h$ such that the zeta function $\zeta(s,U)$ has not a pole at $s=\sigma_h$ are omitted. In such a case, the result must be read by means of the analytic extension of the zeta function $\zeta(s,U)$.
\end{theo}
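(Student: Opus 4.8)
The proof of Theorem~\ref{tt} is not a fresh argument but a careful bookkeeping exercise built on top of the proof of Theorem~3.9 of \cite{Spr9}, so the plan is to track exactly where the hypothesis ``$\zeta(s,U)$ has a pole at $s=\sigma_h$'' (resp.\ at $s=\rho_l$) is used in that proof, and to check that dropping the terms for which it fails leaves every remaining step intact. First I would recall the structure of that proof: one writes the double zeta function $\zeta(s,S)$ as a Mellin transform involving $\mathcal{T}(s,\lambda,S,U)=\sum_{n} u_n^{-\kappa s}\log\Gamma(-\lambda,u_n^{-\kappa}S_n)$, and one splits $\mathcal{T}$ according to the uniform expansion \eqref{exp} into the regular part $\mathcal{P}(s,\lambda,S,U)$ and a finite sum of ``boundary'' contributions coming from the $\phi_{\sigma_h}(\lambda)u_n^{-\sigma_h}$ and $P_{\rho_l}(\lambda)u_n^{-\rho_l}\log u_n$ terms. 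Each such boundary contribution is handled by extracting $\zeta(s+\text{something},U)$-type factors and then doing the $\lambda$-integral against $\Phi_{\sigma_h}(s)$; the pole of $\zeta(s,U)$ at $s=\sigma_h$ is precisely what produces a nonzero residue/finite-part contribution to $\zeta(s,S)$ near $s=0$.

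The key step is then the following dichotomy. If $s=\sigma_h$ is a pole of $\zeta(s,U)$, the corresponding term contributes exactly as in \cite{Spr9} and must be kept. If $s=\sigma_h$ is \emph{not} a pole, then the function $\sum_n b_{\sigma_h,\alpha_j,0}\,u_n^{-\sigma_h-\kappa s}$ (and its $\log$-analogue) that would otherwise be subtracted is already regular at $s=0$, so there is no reason to subtract it: instead of forming $\mathcal{P}$ one forms $\hat{\mathcal{P}}$, subtracting only the ``dangerous'' terms. I would verify that with this replacement the remaining singular part of $\mathcal{T}$ still has the same analytic structure in $s$ near $0$ that the proof of Theorem~3.9 requires (i.e.\ the Mellin transform in $t$ still converges on the relevant strip and continues meromorphically), because the omitted terms were harmless to begin with. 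This gives the first asserted modification: all quantities indexed by a non-pole $\sigma_h$ drop out of every sum in the formulas.

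The second modification concerns the functions $A_{0,0}(s)$ and $A_{j,1}(s)$ of \eqref{fi2}. In \cite{Spr9} these are regular at $s=0$ precisely because the full set of $b_{\sigma_h,\cdot,\cdot}u_n^{-\sigma_h}$ terms has been subtracted off, killing the poles of $\sum_n a_{\cdot,\cdot,n}u_n^{-\kappa s}=\zeta$-like functions at $s=0$. Once we subtract only the dangerous terms, $A_{0,0}(s)$ and $A_{j,1}(s)$ need no longer be regular at $s=0$; here I would invoke the standard fact that, using the meromorphic continuation of $\zeta(s,U)$ together with the large-$n$ expansion of the coefficients $a_{\alpha_j,k,n}$ (from Lemma~3.3 of \cite{Spr9}, equation \eqref{form}), each of these sums does continue meromorphically through $s=0$, so that its finite part $\Rz_{s=0}$ is well defined. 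Replacing $A_{0,0}(0)$ by $\Rz_{s=0}A_{0,0}(s)$ and $A_{0,1}'(0)$ by $\Rz_{s=0}A_{0,1}'(s)$ in the formulas of Theorem~3.9 then accounts exactly for the discrepancy, since the difference between subtracting the full $b$-terms and subtracting only the dangerous ones is, term by term, a meromorphic function whose finite part at $0$ is the correction.

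The main obstacle, and the point requiring the most care, is the uniformity in $\lambda$: in \cite{Spr9} the interchange of the sum over $n$, the $t$-integral and the $\lambda$-contour integral is justified using the uniform expansion \eqref{exp} with the \emph{full} collection of subtracted terms, so one must check that $\hat{\mathcal{P}}(s,\lambda,S,U)$ still satisfies the bounds in $\lambda$ (on $D_{\theta,c}$) and in $n$ needed to push the contour and apply Fubini. Since the omitted terms are exactly $\sum_h \phi_{\sigma_h}(\lambda)u_n^{-\sigma_h}$ with $\sigma_h$ not a pole, and each $\phi_{\sigma_h}(\lambda)$ has the controlled asymptotic behaviour recorded in \eqref{form}, this amounts to observing that adding back finitely many such well-behaved terms cannot destroy the estimates — but it is here that one genuinely has to rerun the analytic estimates of Section~3 of \cite{Spr9} rather than merely cite them, which is why the statement of Theorem~\ref{tt} is phrased as ``the same argument applies'' rather than as a black-box corollary.
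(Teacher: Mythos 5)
Your proposal reproduces essentially the paper's own argument: one replaces the regular part $\mathcal{P}$ of Lemma 3.5 of \cite{Spr9} by the function $\hat{\mathcal{P}}$ obtained by subtracting only the terms whose index $\sigma_h$ (or $\rho_l$) is a pole of $\zeta(s,U)$, reruns the argument of Section 3 of \cite{Spr9}, omits the non-pole terms from all sums, and reads $A_{0,0}(0)$ and $A_{0,1}'(0)$ as finite parts at $s=0$ of functions meromorphically continued via the extension of $\zeta(s,U)$ and the large-$n$ expansion of the coefficients $a_{\alpha_j,k,n}$. Your closing remark about re-checking the uniform-in-$\lambda$ estimates for $\hat{\mathcal{P}}$ is precisely the step the paper compresses into ``the same argument applies,'' so the two proofs coincide in substance.
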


Next, assuming some simplified pole structure for the zeta function $\zeta(s,U)$, sufficient for the present analysis, we state the main result of this section.

\begin{theo} \label{t4} Let $S$ be spectrally decomposable over $U$ as in Definition \ref{spdec}. Assume that the functions $\Phi_{\sigma_h}(s)$ have at most simple poles for $s=0$. Then,
$\zeta(s,S)$ is regular at $s=0$, and
\begin{align*}
\zeta(0,S)=&-A_{0,1}(0)+\frac{1}{\kappa}{\sum_{h=0}^\ell} \Ru_{s=0}\Phi_{\sigma_h}(s)\Ru_{s=\sigma_h}\zeta(s,U),\\
\zeta'(0,S)=&-A_{0,0}(0)-A_{0,1}'(0)+\frac{\gamma}{\kappa}\sum_{h=0}^\ell\Ru_{s=0}\Phi_{\sigma_h}(s)\Ru_{s=\sigma_h}\zeta(s,U)\\
&+\frac{1}{\kappa}\sum_{h=0}^\ell\Rz_{s=0}\Phi_{\sigma_h}(s)\Ru_{s=\sigma_h}\zeta(s,U)+{\sum_{h=0}^\ell}{^{\displaystyle
'}}\Ru_{s=0}\Phi_{\sigma_h}(s)\Rz_{s=\sigma_h}\zeta(s,U),
\end{align*}
where the notation $\sum'$ means that only the terms such that $\zeta(s,U)$ has a pole at $s=\sigma_h$ appear in the sum.

\end{theo}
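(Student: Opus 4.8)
The plan is to derive Theorem \ref{t4} from Theorem \ref{tt} (equivalently, from Theorem 3.9 of \cite{Spr9} in the modified form just established) by specializing the general pole structure of $\zeta(s,U)$ to the simple situation described in the hypothesis, namely that each $\Phi_{\sigma_h}(s)$ has at most a simple pole at $s=0$ and — implicitly in the formulation — that $\zeta(s,U)$ has at most simple poles and that the relevant coincidences happen only between $s=0$ and the points $s=\sigma_h$. First I would recall the meaning of all the ingredients: the functions $\Phi_{\sigma_h}(s)$ from (\ref{fi1}), their asymptotic coefficients $b_{\sigma_h,\alpha_j,k}$ from (\ref{form}), and the sums $A_{0,0}(s)$, $A_{j,1}(s)$ from (\ref{fi2}), together with the fact from the discussion preceding Theorem \ref{tt} that $A_{0,0}(s)$ and $A_{0,1}(s)$ are meromorphic and that their values at $0$ must be replaced by the finite parts $\Rz_{s=0}$ when $\zeta(s,U)$ forces a pole. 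The main work is bookkeeping: one writes out the general formula of Theorem 3.9 of \cite{Spr9}, which expresses $\zeta(0,S)$ and $\zeta'(0,S)$ in terms of the quantities $\Ru_{s=\sigma_h}\zeta(s,U)$, $\Rz_{s=\sigma_h}\zeta(s,U)$, the Laurent coefficients of $\Phi_{\sigma_h}(s)$ at $s=0$, the Euler constant $\gamma$ coming from the derivative of $\Gamma$ at the relevant points, and the numbers $A_{0,0}(0)$, $A_{0,1}(0)$, $A_{0,1}'(0)$, and then one discards every term attached to an index $\sigma_h$ at which $\zeta(s,U)$ is regular (by Theorem \ref{tt}).

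Concretely, the key steps in order are: (i) fix notation for the Laurent expansions $\Phi_{\sigma_h}(s) = \frac{1}{s}\Ru_{s=0}\Phi_{\sigma_h}(s) + \Rz_{s=0}\Phi_{\sigma_h}(s) + O(s)$ and $\zeta(s,U) = \frac{1}{s-\sigma_h}\Ru_{s=\sigma_h}\zeta(s,U) + \Rz_{s=\sigma_h}\zeta(s,U) + O(s-\sigma_h)$; (ii) substitute these into the $\mathcal{T}$-decomposition $\mathcal{T}(s,\lambda,S,U) = \hat{\mathcal{P}}(s,\lambda,S,U) + \sum_h \phi_{\sigma_h}(\lambda)\zeta(s+\sigma_h \text{ shifted}, U) + \dots$ exactly as in Section 3 of \cite{Spr9}, keeping only those $h$ for which $\sigma_h$ is a genuine pole of $\zeta(s,U)$; (iii) apply the Mellin-transform / contour argument of \cite{Spr9} that turns the $\lambda$-integral of $\phi_{\sigma_h}(\lambda)$ against $\e^{-\lambda t}/(-\lambda)$ into $\Phi_{\sigma_h}(s)$, producing a product of a pole from $\Phi_{\sigma_h}$ at $s=0$ with a pole from $\zeta(s,U)$ at $s=\sigma_h$; (iv) extract $\zeta(0,S)$ as the constant term and $\zeta'(0,S)$ as the linear term in $s$ of the resulting expression, which is where the $\Ru\cdot\Ru$, $\Rz\cdot\Ru$, $\Ru\cdot\Rz$ and $\gamma\cdot\Ru\cdot\Ru$ combinations arise; (v) identify the remaining regular contribution $\hat{\mathcal{P}}(0,0,S,U)$ and its $s$-derivative with $-A_{0,1}(0)$, $-A_{0,0}(0)-A_{0,1}'(0)$ respectively, using the finite-part prescription from the paragraph before Theorem \ref{tt} in the cases where those $A$'s are themselves singular. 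The hypothesis that $\Phi_{\sigma_h}(s)$ has at most a simple pole at $s=0$ is what guarantees that no $1/s^2$ terms appear, so that the product with a simple pole of $\zeta(s,U)$ contributes only to $\zeta(0,S)$ and $\zeta'(0,S)$ and not to higher derivatives or to a pole of $\zeta(s,S)$ itself — hence the regularity of $\zeta(s,S)$ at $s=0$.

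The step I expect to be the main obstacle is step (iv)–(v): carefully matching the finite and linear parts of the product of two Laurent series, while simultaneously tracking the $\gamma$ that comes from expanding $\Gamma$-factors (or equivalently from the $s$-dependence of the Mellin kernel $\Gamma(s)$) near the poles, and making sure the replacement of $A_{0,0}(0)$, $A_{0,1}'(0)$ by their regularized finite parts $\Rz_{s=0}A_{0,0}(s)$, $\Rz_{s=0}A_{0,1}'(s)$ is consistent with how these terms enter Theorem 3.9 of \cite{Spr9}. Since all of this is already carried out in \cite{Spr9} in greater generality, the proof of Theorem \ref{t4} is essentially a matter of transcribing that argument under the simplifying pole hypotheses and invoking Theorem \ref{tt} to prune the terms with $\sigma_h$ not a pole of $\zeta(s,U)$; I would therefore write the proof as: apply Theorem \ref{tt}, note that the simple-pole assumption on $\Phi_{\sigma_h}$ collapses the general double-pole formula of \cite{Spr9} to the stated one, and verify the coefficient of each surviving term by the Laurent-product computation above. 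I do not expect any genuinely new analytic input beyond what \cite{Spr3,Spr5,Spr9} already provide.
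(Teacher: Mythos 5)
Your proposal is correct and follows essentially the same route as the paper, which states Theorem \ref{t4} as a direct specialization of Theorem 3.9 of \cite{Spr9}, in the modified form of Theorem \ref{tt}, under the simplified pole hypotheses (the paper gives no further argument beyond the discussion preceding Theorem \ref{tt} and the comparison with \cite{Spr5} and \cite{Spr6}). Your Laurent-product bookkeeping and the finite-part prescription for $A_{0,0}(0)$ and $A_{0,1}'(0)$ are exactly the content of that specialization, so no new input is needed.
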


This result should be compared with the Spectral Decomposition Lemma  of \cite{Spr5} and Proposition 1 of \cite{Spr6}.

\begin{corol} \label{c} Let $S_{(j)}=\{\lambda_{(j),n,k}\}_{n,k=1}^\infty$, $j=1,2$, be two double sequences that satisfy all the requirements of Definition \ref{spdec} of spectral decomposability over a common sequence $U$, with the same parameters $\kappa$, $\ell$, etc., except that the polynomials $P_{(j),\rho}(\lambda)$ appearing in condition (2) do not vanish for $\lambda=0$. Assume that the difference of such polynomials does satisfy this condition, namely that $P_{(1),\rho}(0)-P_{(2),\rho}(0)=0$. Then, the difference of the zeta functions $\zeta(s,S_{(1)})-\zeta(s,S_{(2)})$ is regular at $s=0$ and satisfies the formulas given in Theorem \ref{t4}.
\end{corol}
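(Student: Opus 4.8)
The plan is to reduce Corollary \ref{c} to Theorem \ref{t4} by forming the difference sequence at the level of the logarithmic Gamma functions. First I would write, for each $j=1,2$, the expansion of condition (2) in the form
\[
\log\Gamma(-\lambda,u_n^{-\kappa}S_{(j),n})=\sum_{h=0}^{\ell}\phi_{(j),\sigma_h}(\lambda)u_n^{-\sigma_h}+\sum_{l=0}^{L}P_{(j),\rho_l}(\lambda)u_n^{-\rho_l}\log u_n+o(u_n^{-r_0}),
\]
and observe that the quantity $\zeta(s,S_{(1)})-\zeta(s,S_{(2)})$ is governed by the difference $\T(s,\lambda,S_{(1)},U)-\T(s,\lambda,S_{(2)},U)=\sum_n u_n^{-\kappa s}\left(\log\Gamma(-\lambda,u_n^{-\kappa}S_{(1),n})-\log\Gamma(-\lambda,u_n^{-\kappa}S_{(2),n})\right)$. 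The key point is that the offending log-terms enter the decomposition of $\T$ in Lemma 3.5 of \cite{Spr9} through the polynomials $P_{(j),\rho_l}(\lambda)$, and the obstruction to applying Theorem \ref{t4} directly is precisely the condition $P_{(j),\rho_l}(0)=0$, which fails individually but, by hypothesis, holds for the difference $P_{(1),\rho_l}(\lambda)-P_{(2),\rho_l}(\lambda)$.

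Next I would argue that every step of the proof of Theorem 3.9 of \cite{Spr9}, as recalled and modified in Theorem \ref{tt} above, is linear in the input data $\log\Gamma(-\lambda,u_n^{-\kappa}S_n)$: the functions $\Phi_{\sigma_h}(s)$, the coefficients $a_{\alpha_j,k,n}$, $b_{\sigma_h,\alpha_j,k}$, and hence the functions $A_{0,0}(s)$ and $A_{j,1}(s)$ all depend linearly on that input. Consequently, running the argument for the formal difference, i.e. replacing $\log\Gamma(-\lambda,u_n^{-\kappa}S_n)$ by $\log\Gamma(-\lambda,u_n^{-\kappa}S_{(1),n})-\log\Gamma(-\lambda,u_n^{-\kappa}S_{(2),n})$, produces the functions $\phi_{(1),\sigma_h}(\lambda)-\phi_{(2),\sigma_h}(\lambda)$ and $P_{(1),\rho_l}(\lambda)-P_{(2),\rho_l}(\lambda)$ in place of the original ones. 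The latter now satisfy $(P_{(1),\rho_l}-P_{(2),\rho_l})(0)=0$ by hypothesis, so the difference data satisfies all the requirements of Definition \ref{spdec}. Strictly speaking the difference is not the logarithmic Gamma function of an honest sequence, but the only place the sequence structure was used in Theorem \ref{t4} is through the analytic and asymptotic properties of $\T(s,\lambda,S,U)$ and of the coefficient functions, all of which are inherited by the difference; so the formulas of Theorem \ref{t4} apply verbatim, giving the regularity at $s=0$ and the stated expressions for $\zeta(0,S_{(1)})-\zeta(0,S_{(2)})$ and its derivative.

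The main obstacle I anticipate is justifying rigorously that the ``difference'' object behaves like a genuine spectrally decomposable sequence, in particular that the decomposition of $\T$ into regular and singular parts (Lemma 3.5 of \cite{Spr9}) and the residue computations of Section 3 of \cite{Spr9} remain valid when the input is a difference of two logarithmic Gamma functions rather than a single one. I would handle this by noting that those lemmas are proved by manipulating the asymptotic expansion in $n$ term by term, never invoking positivity or the product structure beyond the convergence of $\zeta(s,U)$ and the uniform remainder estimate $o(u_n^{-r_0})$, both of which hold for the difference since each $S_{(j)}$ has the \emph{same} $U$, $\kappa$, $\ell$, $r_0$. A secondary point is checking that the terms $\Phi_{(1),\sigma_h}(s)\Rd_{s=\sigma_h}\zeta(s,U)$ that would arise from non-vanishing $P_{\rho_l}(0)$ cancel in the difference: this is exactly where the hypothesis $P_{(1),\rho}(0)=P_{(2),\rho}(0)$ is used, and it should follow by tracking those contributions through the linear argument and observing they depend only on $P_{(j),\rho}(0)$.
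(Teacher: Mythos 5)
Your proposal is correct and is essentially the argument the paper intends (the corollary is stated as an immediate consequence of Theorem \ref{tt}/\ref{t4} without a separate written proof): one exploits the linearity of the whole construction of \cite{Spr9} in the data $\log\Gamma(-\lambda,u_n^{-\kappa}S_n)$, notes that the only obstruction is the non-vanishing of $P_{(j),\rho}(0)$, whose contributions depend only on those constant terms and the common $U,\kappa$, and observes that they cancel in the difference by hypothesis. Your caveat about the difference not being the logarithmic Gamma function of an honest sequence, handled by noting that only the asymptotic and analytic structure of $\T(s,\lambda,S,U)$ is used, is exactly the right justification.
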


We conclude this section by recalling some results on zeta determinants of some simple sequences that will be necessary in the following. This results can be found in different places, and are known to specialists. We will use the formulation of \cite{Spr1}. For positive real numbers $l$ and $q$, define the non homogeneous quadratic Bessel zeta function by
\[
z(s,\nu,q,l)=\sum_{k=1}^\infty \left(\frac{j_{\nu,k}^2}{l^2}+q^2\right)^{-s},
\]
for $\Re(s)>\frac{1}{2}$. Then, $z(s,\nu,q,l)$ extends analytically to a meromorphic function in the complex plane with simple poles at $s=\frac{1}{2}, -\frac{1}{2}, -\frac{3}{2}, \dots$. The point $s=0$ is a regular point and
\beq\label{p00}
\begin{aligned}
z(0,\nu,q,l)&=-\frac{1}{2}\left(\nu+\frac{1}{2}\right),\\
z'(0,\nu,q,l)&=-\log\sqrt{2\pi l}\frac{I_\nu(lq)}{q^\nu}.
\end{aligned}
\eeq

In particular, taking the limit for $q\to 0$,
\[
z'(0,\nu,0,l)=-\log\frac{\sqrt{\pi}l^{\nu+\frac{1}{2}}}{2^{\nu-\frac{1}{2}}\Gamma(\nu+1)}.
\]

\section{The analytic torsion }
\label{s4}

In this section we give the analytic torsions of $C_\alpha S^n_{l\sin\alpha}$, for $n=1,2$, and $3$. Actually, the case $n=1$ is essentially contained in \cite{Spr6}, and both the cases $n=1$ and $n=2$ are given in \cite{HMS}, Sections 5.4 and 5.5, so we will focus here on the new case of $C_\alpha S^3_{l\sin\alpha}$. 
By the analysis in Section \ref{s3}, the relevant zeta functions are
\begin{align*}
\zeta(s,\Delta^{(1)})&=\sum_{k=1}^\infty \frac{j_{2,k}^{-2s}}{l^{-2s}}+2\sum_{n,k=1}^\infty n(n+2)
\frac{(j'_{\mu_{1,n},k})^{-2s}}{l^{-2s}} + \sum_{n,k=1}^{\infty} (n+1)^2 \frac{\tilde
j_{\mu_{0,n},k,-}^{-2s}}{l^{-2s}}\\
&+\sum_{n,k=1}^{\infty} (n+1)^2 \frac{j_{\mu_{0,n},k}^{-2s}}{l^{-2s}},\\
\zeta(s,\Delta^{(2)})&= \sum_{n,k=1}^{\infty} (n+1)^2 \frac{\tilde j_{\mu_{0,n},k,+}^{-2s}}{l^{-2s}} +
2\sum_{n,k=1}^{\infty} n(n+2) \frac{(j'_{\mu_{1,n},k})^{-2s}}{l^{-2s}} \\
&+2 \sum_{n,k=1}^{\infty} n(n+2) \frac{j_{\mu_{1,n},k}^{-2s}}{l^{-2s}} +
\sum_{n,k=1}^{\infty} (n+1)^2 \frac{j_{\mu_{0,n},k}^{-2s}}{l^{-2s}},\\
\zeta(s,\Delta^{(3)})&=\sum_{k=1}^\infty \frac{j_{1,k}^{-2s}}{l^{-2s}}+2\sum_{n,k=1}^\infty n(n+2)
\frac{j_{\mu_{1,n},k}^{-2s}}{l^{-2s}} + \sum_{n,k=1}^{\infty} (n+1)^2 \frac{\tilde j_{\mu_{0,n},k,+}^{-2s}}{l^{-2s}}\\
&+ \sum_{n,k=1}^{\infty} (n+1)^2 \frac{j_{\mu_{0,n},k}^{-2s}}{l^{-2s}},\\
\zeta(s,\Delta^{(4)})&=\sum_{k=1}^{\infty} \frac{j_{1,k}^{-2s}}{l^{-2s}} + \sum_{n,k=1}^{\infty} (n+1)^2
\frac{j_{\mu_{0,n},k}^{-2s}}{l^{-2s}},
\end{align*}
and by equation (\ref{analytic}), the torsion is ($a=\sin\alpha=\frac{1}{\nu}$)
\begin{align*}
\log T(C_\alpha S^3_{la})&=-\frac{1}{2}\zeta'(0,\Delta^{(1)}) +\zeta'(0,\Delta^{(2)})-\frac{3}{2}\zeta'(0,\Delta^{(3)})+2\zeta'(0,\Delta^{(4)}).
\end{align*}

Define the function
\begin{align*}
t(s)=&-\frac{1}{2}\zeta(s,\Delta^{(1)}) +\zeta(s,\Delta^{(2)})-\frac{3}{2}\zeta(s,\Delta^{(3)})+2\zeta(s,\Delta^{(4)})\\
=&\frac{1}{2} \sum_{k=1}^{\infty} \frac{j_{1,k}^{-2s}}{l^{-2s}}-\frac{1}{2} \sum_{k=1}^\infty
\frac{j_{2,k}^{-2s}}{l^{-2s}}\\
& +  \sum_{n,k=1}^\infty n(n+2) \frac{(j'_{\mu_{1,n},k})^{-2s}}{l^{-2s}} -
\sum_{n,k=1}^{\infty} n(n+2) \frac{j_{\mu_{1,n},k}^{-2s}}{l^{-2s}}\\
&+ \sum_{n,k=1}^{\infty} (n+1)^2 \frac{j_{\mu_{0,n},k}^{-2s}}{l^{-2s}} -\frac{1}{2}\sum_{n,k=1}^{\infty} (n+1)^2
\frac{\tilde j_{\mu_{0,n},k,-}^{-2s}}{l^{-2s}} -\frac{1}{2} \sum_{n,k=1}^{\infty} (n+1)^2 \frac{\tilde
j_{\mu_{0,n},k,+}^{-2s}}{l^{-2s}}\\
=& l^{2s}\left(\frac{1}{2}z_{1}(s) - \frac{1}{2}z_{2}(s) + \hat{Z}(s) -Z(s) +Z_0(s)
-\frac{1}{2}Z_+(s) - \frac{1}{2}Z_-(s) \right) ,
\end{align*}
then
\begin{align*}
\log T(C_\alpha S^3_{la})=t'(0)=&\frac{1}{2}z'_{1}(0) - \frac{1}{2}z'_{2}(0) + \hat{Z}'(0) -Z'(0) +Z'_0(0)\\
&-\frac{1}{2}Z'_+(0)- \frac{1}{2}Z'_-(0) +
\log l^2 \left(\frac{1}{2}z_{1}(0) - \frac{1}{2}z_{2}(0)\right.\\
& \left.+\hat{Z}(0) -Z(0) +Z_0(0) -\frac{1}{2}Z_+(0) - \frac{1}{2}Z_-(0)\right).
\end{align*}

Using equations (\ref{p00})  of Section \ref{s3}, we compute $z_{1/2}(0)$ e $z'_{1/2}(0)$. We obtain

\begin{equation}\label{ttt}
\begin{aligned}
\log T(C_\alpha S^3_{la})&= \left(\frac{1}{4} +
\hat{Z}(0) -Z(0) +Z_0(0) -\frac{1}{2}Z_+(0) - \frac{1}{2}Z_-(0)\right)\log l^2\\
&+ \left(-\log 2 + \hat{Z}'(0) -Z'(0) +Z'_0(0) -\frac{1}{2}Z'_+(0) - \frac{1}{2}Z'_-(0) \right).
\end{aligned}
\end{equation}

In order to evaluate the remaining part, we use Corollary \ref{c} of Theorem \ref{t4}. We consider separately the two functions $Z(s)-\hat Z(s)$, and $2Z_0(s)-Z_+(s)-Z_-(s)$. In the first case, the relevant sequences are the double sequences $S=\{n(n+2):j_{\mu_{1,n},k}^2\}$ and $\hat S=\{n(n+2):(j_{\mu_{1,n},k}')^2\}$, and the simple sequence $U_1=\{n(n+2):\mu_{1,n}\}_{n=1}^\infty$,  and $Z(s)=\zeta(s,S)$, $\hat
Z(s)=\zeta(s,\hat S)$. In the second case, the relevant sequences are the double sequences $S_0=\{(n+1)^2:j_{\mu_{0,n},k}^2\}$ and $S_\pm=\{(n+1)^2:(\tilde j_{\mu_{0,n,\pm},k})^2\}$, and the simple sequence $U_0=\{\mu_{0,n}\}_{n=1}^\infty$,  and $Z_0(s)=\zeta(s,S_0)$, $Z_\pm(s)=\zeta(s, S_\pm)$. 

We start by analysing the two simple sequences $U_j$, $j=0,1$. Recall from Lemma \ref{eig3}, that
\[
\mu_{0,n}=\sqrt{\nu^2 n(n+2)+1}, \qquad \mu_{1,n} = \nu(n+1).
\]

Consider first the sequence $U_1=\{n(n+2):\mu_{1,n}\}_{n=1}^\infty$. By definition of $\mu_{1,n}$, it is easy to see that
\[
\zeta(s,U_1)=\nu^{-s}\left(\zeta_R(s-2)-\zeta_R(s)\right),
\]
and therefore $U_1$ is a totally regular sequence of spectral type with infinite order, $\ec(U_1)=\ge(U_1)=3$, and  $\zeta(s,U_1)$ has simple poles at $s=1$ and $s=3$ with residues:
\beq\label{rrr1}\begin{aligned}
&\Rz_{s=1}\zeta(s,U_1)=\frac{1}{\nu}\left(\log\nu-\gamma-\frac{1}{12}\right),&\Ru_{s=1}\zeta(s,U_1)=-\frac{1}{\nu},\\
&\Rz_{s=3}\zeta(s,U_1)=\frac{1}{\nu^3}\left(\gamma-\log\nu-\zeta(3)\right),&\Ru_{s=3}\zeta(s,U_1)=\frac{1}{\nu^3}.\\
\end{aligned}
\eeq

The analysis for the sequence $U_0$ is a little bit longer. By definition $U_0=\{(n+1)^2:\mu_{0,n}\}_{n=1}^\infty$, where
\[
\mu_{0,n}=\sqrt{\nu^2 n(n+2)+1}.
\]

For a positive $q$, consider the sequence 
\[
L_q=\{(n+1)^2:\sqrt{n(n+2)+q}\}_{n=1^\infty},
\]

Then, it is clear that 
\[
\zeta(s,U_0)=\nu^{-s}\zeta(s,L_{\frac{1}{\nu^2}}).
\]

The sequence $L_0$ is the sequence of the square roots of the positive eigenvalues of the Laplace operator on the three sphere $S^3$ of radius 1 (see \cite{Spr0}, and references therein). Thus,
\[
\zeta(2s,L_0)=\zeta(s,\Sp_+\Delta_{S^3}).
\]

The zeta function $\zeta(s,\Sp_+\Delta_{S^3})$ has been studied by various author. We will refer to \cite{Spr0}. Using the results in \cite{Spr0}, it follows that $\ec(\Sp_+\Delta^{(0)}_{S^3})=\frac{3}{2}$, $\ge(\Sp_+ \Delta^{(0)}_{S^3})=1$, and that $\Sp_+ \Delta^{(0)}_{S^3}$ is a totally regular sequence of spectral type with infinite order. Since shifting the sequence does not alter its character (see \cite{Spr4}), it follows that $\ec(U_0)=\ge(U_0)=3$, and that $U$ is a totally regular sequence of spectral type with infinite order.
In \cite{Spr0}, it is also proved that $\zeta(s,\Sp_+\Delta_{S^3})$ has simple poles at $s=\frac{3}{2},\frac{1}{2},-\frac{j}{2}$, for all $j>0$, and formulas for the residues are given. In particular:
\begin{align*}
&\Ru_{s=\frac{3}{2}}\zeta(s,\Sp_+ \Delta^{(0)}_{S^3})=\frac{1}{2},&\Ru_{s=3}\zeta(s,\Sp_+ \Delta^{(0)}_{S^3})=\frac{1}{4},\\
\end{align*}
and hence, $\zeta(s,L_0)$ has one simple pole at $s=1$, and $s=3$ with the residues: 
\begin{align*}
&\Ru_{s=3}\zeta(s,\Sp_+ \Delta^{(0)}_{S^3})=1,&\Ru_{s=1}\zeta(s,\Sp_+ \Delta^{(0)}_{S^3})=\frac{1}{2}.\\
\end{align*}

Expanding the power of the binomial, we have that
\begin{align*}
\zeta(s,L_q)&=\zeta(s,L_0)-\frac{s}{2}\zeta(s,L_0)q+\sum_{j=2}^\infty \binom{-\frac{s}{2}}{j}\zeta(s+2j,L_0)q^j,
\end{align*}
and therefore,
\begin{align*}
&\Ru_{s=1}\zeta(s,L_q)=\frac{1}{2}(1-q),&\Ru_{s=3}\zeta(s,L_q)=1,\\
\end{align*}
and we have the expansions
\beq\label{resz1}
\begin{aligned}
\zeta(s,U_0)&=\nu^{-s}\zeta(s,L_q)=\frac{1}{2\nu}\left(1-\frac{1}{\nu^2}\right)\frac{1}{s-1}+K_1(s),& {\rm near}~s=1,\\
\zeta(s,U_0)&=\nu^{-s}\zeta(s,L_q)=\frac{1}{\nu^3}\frac{1}{s-3}+K_3(s),& {\rm near}~s=3,\\
\end{aligned}
\eeq
where the $K_j(s)$ are some regular functions.

Next, we start the analysis of the double sequences. We split it into two parts.

\subsection{Part I} In this first part we deal with $Z(s)-\hat Z(s)$. Thus, we consider the sequences $S$ and $\hat S$. Using classical estimates for the zeros of Bessel function \cite{Wat}, we find  that $\ec(S)=\ec(\hat S)=2$, and the relative genus are $(2,0,1)$ for both sequences. The fact that $S_n$ and $\hat S_n$ are totally regular sequences of spectral type with infinite order, will be a consequence of the following analysis. Note that
we have the product representations (the first is classical, see for example \cite{Wat}, the second follows
using the Hadamard factorization theorem)

\begin{align*}
I_\nu(z)&=\frac{z^\nu}{2^\nu\Gamma(\nu+1)}\prod_{k=1}^\infty \left(1+\frac{z^2}{j_{\nu,k}^2}\right),\\
I_\nu'(z)&=\frac{z^{\nu-1}}{2^\nu\Gamma(\nu)}\prod_{k=1}^\infty \left(1+\frac{z^2}{(j_{\nu,k}')^2}\right).\\
\end{align*}

Using these representations, we obtain the following representations for the Gamma functions associated to the
sequences $S_n$ and $\hat S_n$. For further use, we give instead the representations for the Gamma functions associated
to the sequences $S_n/\mu_{1,n}^2$, and $\hat S_n/\mu_{1,n}^2$, that will do as well. By the definition in equation
(\ref{gamma}), with $z=\sqrt{-\lambda}$, we have
\begin{align*}
\log \Gamma(-\lambda,S_n/(\mu_{1,n})^2)=&-\log\prod_{k=1}^\infty \left(1+\frac{(-\lambda)(\mu_{1,n})^2}{j_{\mu_{1,n},k}^2}\right)\\
=&-\log I_{\mu_{1,n}}(\mu_{1,n}\sqrt{-\lambda})+(\mu_{1,n})\log\sqrt{-\lambda} \\
&+\mu_{1,n}\log (\mu_{1,n})-\mu_{1,n}\log 2-\log\Gamma(\mu_{1,n}+1),\\
\log \Gamma(-\lambda,\hat S_n/(\mu_{1,n})^2)=&-\log\prod_{k=1}^\infty \left(1+\frac{(-\lambda)(\mu_{1,n})^2}{(j_{\mu_{1,n},k}')^2}\right)\\
=&-\log I'_{\mu_{1,n}}(\mu_{1,n}\sqrt{-\lambda})+(\mu_{1,n}-1)\log\sqrt{-\lambda} \\
&+\mu_{1,n}\log (\mu_{1,n})-\mu_{1,n}\log 2-\log\Gamma(\mu_{1,n}).
\end{align*}

A first consequence of these representations is that we have a complete asymptotic expansion of the Gamma functions
$\log \Gamma(-\lambda,S_n)$, and $\log \Gamma(-\lambda,\hat S_n)$, and therefore $S_n$ and $\hat S_n$ are sequences of
spectral type. Considering the expansions, it follows that they are both totally regular sequences of infinite order.

Next, we prove that $S$ and $\hat S$ are spectrally decomposable over $U_1$ with power $\kappa=2$ and length
$\ell=4$, as in Definition \ref{spdec}. We have to show that the functions $\log \Gamma(-\lambda,S_n/\mu_{1,n}^2)$,
and $\log \Gamma(-\lambda,\hat S_n/\mu_{1,n}^2)$ have the appropriate uniform expansions for large $n$. This follows
using the uniform expansions for the Bessel functions given for example in \cite{Olv} (7.18), and Ex. 7.2,
\[
I_{\nu}(\nu z)=\frac{\e^{\nu\sqrt{1+z^2}}\e^{\nu\log\frac{z}{1+\sqrt{1+z^2}}}}{\sqrt{2\pi
\nu}(1+z^2)^\frac{1}{4}}\left(1+U_1(z)\frac{1}{\nu}+U_2(z)\frac{1}{\nu^2}+U_{3}(z)\frac{1}{\nu^{3}}+O(\frac{1}{\nu^4})\right),
\]
where
\begin{align*}
U_1(z)=&\frac{1}{8\sqrt{1+z^2}}-\frac{5}{24(1+z^2)^\frac{3}{2}},\\
U_2(z)=&\frac{9}{128(1+z^2)}-\frac{77}{192(1+z^2)^2}+\frac{385}{1152(1+z^2)^3},\\
U_3(z)=&\frac{75}{1024(1+z^2)^{\frac{3}{2}}} - \frac{4563}{5120(1+z^2)^{\frac{5}{2}}}+
\frac{17017}{9216(1+z^2)^{\frac{7}{2}}}-\frac{85085}{82944(1+z^2)^{\frac{9}{2}}},
\end{align*}
and 
\[
I_{\nu}'(\nu z)=\frac{(1+z^2)^\frac{1}{4}\e^{\nu\sqrt{1+z^2}}\e^{\nu\log\frac{z}{1+\sqrt{1+z^2}}}}{\sqrt{2\pi
\nu}z}\left(1+V_1(z)\frac{1}{\nu}+V_2(z)\frac{1}{\nu^2}+\dots+O(\frac{1}{\nu^4})\right),
\]
\begin{align*}
V_1(z)=&-\frac{3}{8\sqrt{1+z^2}}+\frac{7}{24(1+z^2)^\frac{3}{2}},\\
V_2(z)=&-\frac{15}{128(1+z^2)}+ \frac{33}{64(1+z^2)^2} - \frac{455}{1152(1+z^2)^3},\\
V_3(z)=&-\frac{105}{1024(1+z^2)^{\frac{3}{2}}} +
\frac{5577}{5120(1+z^2)^{\frac{5}{2}}}-\frac{6545}{3072(1+z^2)^{\frac{7}{2}}}+\frac{95095}{82944(1+z^2)^{\frac{9}{2}}}.
\end{align*}

Using the classical expansion for the logarithm of the Euler Gamma function \cite{GZ} 8.344, we obtain, for large $n$, uniformly in $\lambda$, the expansion of $\log \Gamma(-\lambda,\hat S_n/\mu_{1,n}^2)$ and of $\log \Gamma(-\lambda, S_n/\mu_{1,n}^2)$, and consequentely of the difference

\begin{align*}
\log \Gamma(-\lambda,\hat S_n/\mu_{1,n}^2) &- \log \Gamma(-\lambda, S_n/\mu_{1,n}^2) =
\sum_{h=0}^\infty \left( \hat\phi_{h-1}(\lambda) - \phi_{h-1} (\lambda) \right) \mu_{1,n}^{1-h}\\
&= -\frac{1}{2} \log(1-\lambda) -\frac{1}{2} \log \lambda + \left(\hat \phi_1(\lambda) -
\phi_1(\lambda)\right)\frac{1}{\mu_{1,n}}\\
&+ \left(\hat \phi_2(\lambda) - \phi_2(\lambda)\right)\frac{1}{\mu_{1,n}^2} + \left(\hat
\phi_3(\lambda) - \phi_3(\lambda)\right)\frac{1}{\mu_{1,n}^3} +O\left(\frac{1}{\mu_{1,n}^{4}}\right)
\end{align*}
with
\begin{align*}
\hat\phi_1(\lambda)-\phi_1(\lambda)&=\frac{1}{2}\frac{1}{(1-\lambda)^\frac{1}{2}}-\frac{1}{2}\frac{1}{(1-\lambda)^\frac{3}{2}},\\
\hat\phi_2(\lambda)-\phi_2(\lambda)&=\frac{1}{4}\frac{1}{(1-\lambda)}- \frac{1}{(1-\lambda)^{2}}-\frac{3}{4} \frac{1}{(1-\lambda)^3},\\
\hat\phi_3(\lambda)-\phi_3(\lambda)&=\frac{11}{48}\frac{1}{(1-\lambda)^\frac{3}{2}}-\frac{35}{16}\frac{1}{(1-\lambda)^\frac{5}{2}}
+\frac{67}{16}\frac{1}{(1-\lambda)^\frac{7}{2}}-\frac{107}{48}\frac{1}{(1-\lambda)^\frac{9}{2}}.\\
\end{align*}

Note that the length $\ell$ of the decomposition is precisely $4$. For the $\ec(U_1)=3$, and therefore the larger
integer such that $h-1=\sigma_h\leq 3$ is $4$. However, note that by
Theorem \ref{tt}, only the term with $\sigma_h=1$, and $\sigma_h=3$, namely $h=2,4$, appear in the formula of Theorem \ref{t4}, since the
unique poles of $\zeta(s,U_1)$ are at $s=1$ and $s=3$. We now apply the formulas of Theorem \ref{t4}. 

First, by the definition in equation (\ref{fi1}),
\begin{align*}
\hat \Phi_1(s) - \Phi_1(s)=&\int_0^\infty t^{s-1}\frac{1}{2\pi i}\int_{\Lambda_{\theta,c}}\frac{\e^{-\lambda
t}}{-\lambda}
\left(\frac{1}{2}\frac{1}{(1-\lambda)^\frac{1}{2}}-\frac{1}{2}\frac{1}{(1-\lambda)^\frac{3}{2}}\right) d\lambda dt,\\
\hat \Phi_2(s) - \Phi_2(s)=&\int_0^\infty t^{s-1}\frac{1}{2\pi i}\int_{\Lambda_{\theta,c}}\frac{\e^{-\lambda
t}}{-\lambda}
\left(\frac{1}{4}\frac{1}{(1-\lambda)}- \frac{1}{(1-\lambda)^{2}}-\frac{3}{4} \frac{1}{(1-\lambda)^3}\right)\hspace{-3pt} d\lambda dt,\\
\hat \Phi_3(s) - \Phi_3(s)=&\int_0^\infty t^{s-1}\frac{1}{2\pi i}\int_{\Lambda_{\theta,c}}\frac{\e^{-\lambda
t}}{-\lambda} \left(\frac{11}{48}\frac{1}{(1-\lambda)^\frac{3}{2}}-\frac{35}{16}\frac{1}{(1-\lambda)^\frac{5}{2}}\right) d\lambda dt\\
&+\int_0^\infty t^{s-1}\frac{1}{2\pi i}\int_{\Lambda_{\theta,c}}\frac{\e^{-\lambda
t}}{-\lambda} \left(\frac{67}{16}\frac{1}{(1-\lambda)^\frac{7}{2}}-\frac{107}{48}\frac{1}{(1-\lambda)^\frac{9}{2}}\right) d\lambda dt.\\
\end{align*}

These integrals can be computed using the formula in Appendix \ref{appendixA}. We obtain
\begin{align*}
\Rz_{s=0}\left(\hat \Phi_1(s) - \Phi_1(s)\right)&=-1 ,&\Ru_{s=0}\left(\hat\Phi_1(s) - \Phi_1(s)\right)&=0,\\
\Rz_{s=0}\left(\hat \Phi_2(s) - \Phi_2(s)\right)&=\frac{1}{8} ,&\Ru_{s=0}\left(\hat\Phi_2(s) - \Phi_2(s)\right)&=0,\\
\Rz_{s=0}\left(\hat \Phi_3(s) - \Phi_3(s)\right)&=-\frac{2}{315} ,&\Ru_{s=0}\left(\hat\Phi_3(s) - \Phi_3(s)\right)&=0.
\end{align*}

Second, using this results and the residues of $\zeta(s,U_1)$ given in by equation (\ref{rrr1})
it follows that 
\beq\label{p1}
\begin{aligned}
\hat Z(0)- Z(0)=&-\hat A_{0,1}(0)+ A_{0,1}(0)+\frac{1}{2}\Ru_{s=1}\zeta(s,U_1)\Ru_{s=0}(\hat\Phi_1(s)-\Phi_1(s))\\
&+\frac{1}{2}\Ru_{s=3}\zeta(s,U_1)\Ru_{s=0}(\hat\Phi_3(s)-\Phi_3(s)),\\
=&-\hat A_{0,1}(0)+ A_{0,1}(0),
\end{aligned}
\eeq
and 
\beq\label{p2}
\begin{aligned}
\hat Z'(0)-Z'(0)=&-\hat A_{0,0}(0)-\hat A_{0,1}'(0)+ A_{0,0}(0)+ A_{0,1}'(0)\\
&+\frac{1}{2}\Rz_{s=1}\zeta(s,U_1)\Ru_{s=0}(\hat\Phi_1(s)-\Phi_1(s))\\
&+\frac{1}{2}\Rz_{s=3}\zeta(s,U_1)\Ru_{s=0}(\hat\Phi_3(s)-\Phi_3(s)),\\
=&-\hat A_{0,1}(0)+ A_{0,1}(0)+\frac{1}{2\nu}-\frac{1}{315\nu^3}.
\end{aligned}
\eeq

Third, by equation (\ref{fi2}) and Theorem \ref{tt}, the terms $A_{0,0}(0)$ and $A'_{0,1}(0)$, are
\begin{align*}
A_{0,0}(s)&=\sum_{n=1}^\infty \left(a_{0, 0,n} -b_{1,0,0}u_n^{-1}-b_{3,0,0}u_n^{-3}\right)u_n^{-2 s},\\
A_{0,1}(s)&=\sum_{n=1}^\infty \left(a_{0, 1,n} -b_{1,0,1}u_n^{-1}-b_{3,0,1}u_n^{-3}\right)u_n^{-2 s}.
\end{align*}

Hence, we need the expansion for large $\lambda$ of the functions $\log\Gamma(-\lambda,\hat S_n/\mu_{1,n}^2)$,
$\hat\phi_{1}(\lambda)$, $\hat\phi_{3}(\lambda)$, $\log\Gamma(-\lambda, S_n/\mu_{1,n}^2)$, $\phi_{1}(\lambda)$ and
$\phi_{3}(\lambda)$. Using classical expansions for the Bessel functions and their derivative and the formulas in
equation (\ref{form}), we obtain
\begin{align*}
a_{0,0,n}&=\frac{1}{2}\log 2\pi+\left(\mu_{1,n}+\frac{1}{2}\right)\log\mu_{1,n}-\mu_{1,n}\log 2-\log\Gamma(\mu_{1,n}+1),\\
a_{0,1,n}&=\frac{1}{2}\left(\mu_{1,n}+\frac{1}{2}\right),\\
b_{1,0,0}&=-\frac{1}{12},\hspace{50pt} b_{3,0,0} = \frac{1}{360}, \hspace{50pt}b_{1,0,1}=b_{3,0,1}=0,
\end{align*}
and
\begin{align*}
\hat a_{0,0,n}&=\frac{1}{2}\log 2\pi+\left(\mu_{1,n}+\frac{1}{2}\right)\log\mu_{1,n}-\mu_{1,n}\log 2-\log\Gamma(\mu_{1,n}+1),\\
\hat a_{0,1,n}&=\frac{1}{2}\left(\mu_{1,n}-\frac{1}{2}\right),\\
\hat b_{1,0,0}&=-\frac{1}{12}, \hspace{50pt}\hat b_{3,0,0} = \frac{1}{360}, \hspace{50pt}\hat b_{1,0,1}=\hat b_{3,0,1}=0.
\end{align*}

This  shows that $A_{0,0}(0)=\hat A_{0,0}(0)$, and that
\[
\hat A_{0,1}(s)- A_{0,1}(s)=-\frac{1}{2}\sum_{n=1}^\infty n(n+2)\mu_{1,n}^{-2s}=-\frac{1}{2}\zeta(2s, U_1).
\]

Thus,
\begin{align*}
\hat A_{0,1}(0)- A_{0,1}(0)&=-\frac{1}{4},\\
\hat A'_{0,1}(0)- A'_{0,1}(0)&=\frac{1}{2}\log\nu-\zeta'(-2)-\frac{1}{2}\log 2\pi.
\end{align*}

Substitution in equations (\ref{p1}) and (\ref{p2}), gives
\begin{align*}
\hat Z(0)- Z(0)=&\frac{1}{4},\\
\hat Z'(0)-Z'(0)=&-\frac{1}{2}\log\nu+\zeta'(-2)+\frac{1}{2}\log 2\pi+\frac{1}{2\nu}-\frac{1}{315\nu^3}.
\end{align*}

\subsection{Part II} In this second part we deal with $2Z_0(s)-Z_+(s)-Z_-(s)$. Thus, we consider the sequences $S_0$ and $S_\pm$. The sequence $S_0$ is analogous to the sequence $S$ analyzed in the previous part. We have that 
\begin{align*}
\log \Gamma(-\lambda,S_{0,n}/\mu_{0,n}^2)=&-\log
I_{\mu_{0,n}}(\mu_{0,n}\sqrt{-\lambda})+\mu_{0,n}\log\sqrt{-\lambda}+\mu_{0,n} \log\mu_{0,n}\\ &-\mu_{0,n}\log2
-\log\Gamma(\mu_{0,n}) - \log \mu_{0,n}.
\end{align*}

Using the uniform expansion of $\log I_{\mu_{0,n}}(\mu_{0,n}\sqrt{-\lambda})$, we obtain the uniform expansion for large $n$:
\begin{align*}
\log &\Gamma(-\lambda, S_{0,n}/\mu_{0,n}^2) \\
&= \sum_{h=0}^\infty \phi_{h-1,0} (\lambda) \mu_{0,n}^{1-h}\\
&= \left( - \sqrt{1-\lambda} + \log(1+\sqrt{1-\lambda}) - \log 2 + 1) - \log \sqrt{-\lambda} \right)\mu_{0,n}\\
&+ \frac{1}{4}\log(1-\lambda) +  \left( - U_1(\sqrt{-\lambda})-\frac{1}{12}\right)\frac{1}{\mu_{0,n}}\\
&+ \left( - U_2(\sqrt{-\lambda})+\frac{1}{2}U_1(\sqrt{-\lambda})^2\right) \frac{1}{\mu_{0,n}^2}\\
&+ \left( -
U_3(\sqrt{-\lambda})+U_1(\sqrt{-\lambda})U_2(\sqrt{-\lambda})-\frac{1}{3}U_1(\sqrt{-\lambda})^3+\frac{1}{360}\right)\frac{1}{\mu_{0,n}^3} +
O\left(\frac{1}{\mu_{1,n}^{4}}\right)\hspace{-1.5pt},
\end{align*}
and hence
\begin{align*}
\phi_{1,0}(\lambda)&=-\frac{1}{8} \frac{1}{(1-\lambda)^{\frac{1}{2}}} + \frac{5}{24}\frac{1}{(1-\lambda)^{\frac{3}{2}}}-\frac{1}{12},\\
\phi_{2,0}(\lambda)&=-\frac{1}{16} \frac{1}{(1-\lambda)}+\frac{3}{8} \frac{1}{(1-\lambda)^{2}}-\frac{5}{16} \frac{1}{(1-\lambda)^3},\\
\phi_{3,0}(\lambda)&=-\frac{25}{384} \frac{1}{(1-\lambda)^{\frac{3}{2}}}+\frac{531}{640}
\frac{1}{(1-\lambda)^{\frac{5}{2}}}
-\frac{221}{128}\frac{1}{(1-\lambda)^{\frac{7}{2}}}+\frac{1105}{1152}\frac{1}{(1-\lambda)^{\frac{9}{2}}}+\frac{1}{360}.
\end{align*}


Using the expansion of $\log I_{\mu_{0,n}}(\mu_{0,n}\sqrt{-\lambda})$, and that of the $\phi_{j,0}(\lambda)$ for large $\lambda$, and the definitions in equations (\ref{form}), we compute 

\begin{align*}
a_{0,0,n,0}&=\frac{1}{2}\log 2\pi+\left(\mu_{0,n}+\frac{1}{2}\right)\log\mu_{0,n}-\mu_{0,n}\log 2-\log\Gamma(\mu_{0,n}+1),\\
a_{0,1,n,0}&=\frac{1}{2}\left(\mu_{0,n}+\frac{1}{2}\right),\\
b_{1,0,0,0}&=-\frac{1}{12}, \hspace{50pt}b_{3,0,0,0} = \frac{1}{360}, \hspace{50pt}b_{1,0,1,0}=b_{3,0,1,0}=0.
\end{align*}

The analysis of the sequences $S_\pm$ needs more work. Let define the functions
\[
T^{\pm}_{\nu}(z)=\pm J_{\nu}(z)+zJ'_\nu(z).
\]

Recalling the series definition of the Bessel function
\[
J_\nu(z)=\frac{z^\nu}{2^\nu}\sum_{k=0}^\infty \frac{(-1)^kz^{2k}}{2^{2k}k!\Gamma(\nu+k+1)},
\]
we obtain that near $z=0$
\[
T_\nu^\pm(z) =\left(1\pm\frac{1}{\nu}\right) \frac{z^\nu}{2^\nu\Gamma(\nu)}.
\]

This means that the function $\hat T^\pm_\nu(z)=z^{-\nu} T^\pm_\nu(z)$ is an even function of $z$. Let $z_{\nu,k,\pm}$
be the positive zeros of $T^\pm_\nu(z)$ arranged in increasing order. By the Hadamard factorization theorem, we have
the product expansion
\[
\hat T^\pm_\nu(z)=\hat T^\pm_\nu(z){\prod_{k=-\infty}^{+\infty}}\left(1-\frac{z}{z_{\nu,k,\pm}}\right),
\]
and therefore
\[
T^\pm_\nu(z)=\left(1\pm\frac{1}{\nu}\right)\frac{z^\nu}{2^\nu\Gamma(\nu)}
\prod_{k=1}^{\infty}\left(1-\frac{z^2}{z^2_{\nu,k,\pm}}\right).
\]

Next,  recalling that (when $-\pi<\arg(z)<\frac{\pi}{2}$)
\begin{align*}
J_\nu(iz)&=\e^{\frac{\pi}{2}i\nu} I_\nu(z),\\
J'_\nu(iz)&=\e^{\frac{\pi}{2}i\nu}\e^{-\frac{\pi}{2}i} I'_\nu(z),\\
\end{align*}
we obtain
\[
T_\nu^\pm(iz)=\e^{\frac{\pi}{2}i\nu}\left(\pm I_\nu(z)+zI'_\nu(z)\right).
\]

Thus, we define (for $-\pi<\arg(z)<\frac{\pi}{2}$) \beq\label{pop} Q^\pm_\nu(z)=\e^{-\frac{\pi}{2}i\nu}T_\nu^\pm(i z),
\eeq and hence
\begin{align*}
Q^\pm_\nu(z)&=\pm I_\nu(z)+zI'_\nu(z)=\left(1\pm\frac{1}{\nu}\right)\frac{z^\nu}{2^\nu\Gamma(\nu)}
\prod_{k=1}^{\infty}\left(1+\frac{z^2}{z^2_{\nu,k,\pm}}\right).
\end{align*}

Using these representations, we obtain the following representations for the Gamma functions associated to the
sequences $S_{\pm,n}$. By the definition in equation (\ref{gamma}), with $z=\sqrt{-\lambda}$, we have
\begin{align*}
\log \Gamma(-\lambda,S_{\pm,n})=&-\log\prod_{k=1}^\infty \left(1+\frac{(-\lambda)}{\tilde j_{\mu_{0,n},k,\pm}^2}\right)\\
=&-\log Q^\pm_{\mu_{0,n}}(\sqrt{-\lambda})+\mu_{0,n}\log\sqrt{-\lambda}\\
&-\mu_{0,n}\log 2-\log\Gamma(\mu_{0,n})+\log\left(1\pm\frac{1}{\mu_{0,n}}\right).
\end{align*}

A first consequence of this representations is that we have a complete asymptotic expansion of the Gamma functions
$\log \Gamma(-\lambda,S_{\pm,n})$, and therefore both $S_{+,n}$ and $S_{-,n}$ are sequences of spectral type. Considering the
expansions, it follows that they are both totally regular sequences of infinite order.

Next, we prove that $S_\pm$ are spectrally decomposable over $U$ with power $\kappa=2$ and length $\ell=4$, as in
Definition \ref{spdec}. We have to show that the functions $\log \Gamma(-\lambda,S_{\pm,n}/u_n^2)$, have the
appropriate uniform expansions for large $n$. We have
\begin{align*}
\log \Gamma(-\lambda,S_{\pm,n}/\mu_{0,n}^2)=&-\log
Q^\pm_{\mu_{0,n}}(\mu_{0,n}\sqrt{-\lambda})+\mu_{0,n}\log\sqrt{-\lambda}+\mu_{0,n} \log\mu_{0,n}\\ &-\mu_{0,n}\log2
-\log\Gamma(\mu_{0,n})+\log\left(1\pm\frac{1}{\mu_{0,n}}\right).
\end{align*}

Recalling the expansions given the previous part, we obtain
\begin{align*}
Q^\pm_\nu(\nu z)
&=\sqrt{\nu}(1+z^2)^\frac{1}{4}\frac{\e^{\nu\sqrt{1+z^2}}\e^{\nu\log\frac{z}{1+\sqrt{1+z^2}}}}{\sqrt{2\pi }}\\
&\hspace{30pt}\left(1+W_{1,\pm}(z)\frac{1}{\nu}+W_{2,\pm}(z)\frac{1}{\nu^2}+W_{3,\pm}(z)\frac{1}{\nu^{3}} +
O(\nu^{-4})\right),
\end{align*}
where $p=\frac{1}{\sqrt{1+z^2}}$, and
\begin{align*}
W_{1,\pm}(p)=V_1(p)\pm p,\hspace{15pt}W_{2,\pm}(p)=V_2(p)\pm  pU_1(p),\hspace{15pt}W_{3,\pm}(p)=V_3(p)\pm pU_2(p),
\end{align*}
\begin{align*}
&W_{1,+}(p)=\frac{5}{8}p+\frac{7}{24}p^3,\\
&W_{2,+}(p)=-\frac{1}{128}p^2+\frac{59}{192}p^4-\frac{455}{1152}p^6, \\
&W_{3,+}(p)=-\frac{33}{1024}p^3+\frac{10571}{15360}p^5-\frac{16555}{9216}p^7 + \frac{95095}{82944}p^9,\\
&W_{1,-}(p)=-\frac{11}{8}p+\frac{7}{24}p^3,\\
&W_{2,-}(p)=-\frac{31}{128}p^2+\frac{139}{192}p^4-\frac{455}{1152}p^6,\\
&W_{3,-}(p)=-\frac{177}{1024}p^3+\frac{22891}{15360}p^5-\frac{22715}{9216}p^7 + \frac{95095}{82944}p^9.\\
\end{align*}

This gives,
\begin{align*}
\log\Gamma&(-\lambda, S_{n,\pm}/\mu_{0,n}^2)\\
=&\sum_{h=0}^\infty \phi_{h-1,\pm}(\lambda) \mu_n^{1-h}=\\
=&\left(1-\sqrt{1-\lambda}+\log(1+\sqrt{1-\lambda})-\log 2\right)\mu_{0,n}\\
&-\frac{1}{4}\log(1-\lambda)+\left(-W_{1,\pm}(\sqrt{-\lambda})\pm 1 -\frac{1}{12}\right)\frac{1}{\mu_{0,n}}\\
&+\left(-W_{2,\pm}(\sqrt{-\lambda})+\frac{1}{2}W_{1,\pm}^2(\sqrt{-\lambda})-\frac{1}{2}\right)\frac{1}{\mu^2_{0,n}}\\
&+\left(W_{1,\pm}(\sqrt{-\lambda})W_{2,\pm}(\sqrt{-\lambda})-W_{3,\pm}(\sqrt{-\lambda})-\frac{1}{3}W^{3}_{1,\pm}(\sqrt{-\lambda})\pm\frac{1}{3}+\frac{1}{360}
\right)\frac{1}{\mu_{0,n}^{3}}\\
& + O\left(\frac{1}{\mu_{0,n}^4}\right),
\end{align*}
and hence
\[
\begin{aligned}
\phi_{1,+}(\lambda)&=-\frac{5}{8}\frac{1}{(1-\lambda)^{\frac{1}{2}}}-\frac{7}{24}\frac{1}{(1-\lambda)^{\frac{3}{2}}}+\frac{11}{12},\\
\phi_{1,-}(\lambda)&=\frac{11}{8}\frac{1}{(1-\lambda)^{\frac{1}{2}}}-\frac{7}{24}\frac{1}{(1-\lambda)^{\frac{3}{2}}}+\frac{13}{12},\\
\end{aligned}
\]
\[
\begin{aligned}
\phi_{2,+}(\lambda)&=\frac{3}{16}\frac{1}{1-\lambda}-\frac{1}{8}\frac{1}{(1-\lambda)^2}+\frac{7}{16}\frac{1}{(1-\lambda)^3}-\frac{1}{2},\\
\phi_{2,-}(\lambda)&=\frac{19}{16}\frac{1}{1-\lambda}-\frac{9}{8}\frac{1}{(1-\lambda)^2}+\frac{7}{16}\frac{1}{(1-\lambda)^3}-\frac{1}{2}.\\
\end{aligned}
\]
\[
\begin{aligned}
\phi_{3,+}(\lambda)&=-\frac{17}{384}\frac{1}{(1-\lambda)^{\frac{3}{2}}}-\frac{389}{640}\frac{1}{(1-\lambda)^{\frac{5}{2}}}
+\frac{203}{128}\frac{1}{(1-\lambda)^{\frac{7}{2}}}-\frac{1463}{1152}\frac{1}{(1-\lambda)^{\frac{9}{2}}}+\frac{121}{360},\\
\phi_{3,-}(\lambda)&=\frac{527}{384}\frac{1}{(1-\lambda)^{\frac{3}{2}}}-\frac{1989}{640}\frac{1}{(1-\lambda)^{\frac{5}{2}}}
+\frac{427}{128}\frac{1}{(1-\lambda)^{\frac{7}{2}}}-\frac{1463}{1152}\frac{1}{(1-\lambda)^{\frac{9}{2}}}-\frac{119}{360}.\\
\end{aligned}
\]

By equation (\ref{fi2}) and Theorem \ref{tt}, the terms $A_{0,0}(s)$ and $A_{0,1}(s)$, are
\begin{align*}
A_{0,0,\pm}(s)&=\sum_{n=1}^\infty \left(a_{0, 0,n,\pm} -b_{1,0,0,\pm}u_n^{-1}-b_{3,0,0,\pm}u_n^{-3}\right)u_n^{-2 s},\\
A_{0,1,\pm}(s)&=\sum_{n=1}^\infty \left(a_{0, 1,n,\pm} -b_{1,0,1,\pm}u_n^{-1}-b_{3,0,1,\pm}u_n^{-3}\right)u_n^{-2 s}.
\end{align*}

Hence, we need the expansion for large $\lambda$ of the functions $\log\Gamma(-\lambda,S_{n,\pm}/\mu_{0,n}^2)$,
$\phi_{1,\pm}(\lambda)$ and $\phi_{3,\pm}(\lambda)$. Using equations (\ref{pop}) and the definition, we obtain
\[
Q^\pm_\nu(z)\sim \frac{\sqrt{z}\e^z}{\sqrt{2\pi}}\left(1+\sum_{k=1}^\infty b_kz^{-k}\right)+O(\e^{-z}),
\]
for large $z$. Therefore,
\begin{align*}
\log\Gamma(-\lambda,S_{n,\pm}/\mu_{0,n}^2)=&-\mu_{0,n}
\sqrt{-\lambda}+\frac{1}{2}\left(\mu_{0,n}-\frac{1}{2}\right)\log(-\lambda)
+\frac{1}{2}\log 2\pi\\
&+\left(\mu_{0,n}-\frac{1}{2}\right)\log\mu_{0,n}
-\log 2^{\mu_{0,n}}\Gamma(\mu_{0,n})\\
&+\log\left(1\pm\frac{1}{\mu_n}\right) +O\left(\frac{1}{\sqrt{-\lambda}}\right).
\end{align*}

Thus,
\begin{align*}
a_{0,0,n,\pm}&=\frac{1}{2}\log 2\pi+\left(\mu_{0,n}-\frac{1}{2}\right)\log\mu_{0,n}-\log 2^{\mu_{0,n}}\Gamma(\mu_{0,n})
+\log\left(1\pm\frac{1}{\mu_{0,n}}\right),\\
a_{0,1,n,\pm}&=\frac{1}{2}\left(\mu_{0,n}-\frac{1}{2}\right),\\
b_{1,0,0,+}&=-\frac{11}{12}, \hspace{30pt}b_{3,0,0,+} = \frac{121}{360}, \hspace{30pt}b_{1,0,1,\pm}=b_{3,0,1,\pm} =0,\\
b_{1,0,0,-}&=-\frac{13}{12}, \hspace{30pt}b_{3,0,0,+} = \frac{119}{360}.
\end{align*}

Using these coefficients and the ones obtained for the sequence $S_0$, we conclude that
\begin{align*}
2A_{0,0,0}(s)-A_{0,0,+}(s)-A_{0,0,-}(s) = - \sum_{n=1}^{\infty}
\log\left(1-\frac{1}{\mu^2_{0,n}}\right)\frac{(n+1)^2}{\mu_{0,n}^{2s}},
\end{align*}
and

\begin{align*}
2A_{0,1,0}(s)-A_{0,1,+}(s)-A_{0,1,-}(s) = \sum_{n=1}^{\infty} \frac{(n+1)^2}{\mu_{0,n}^{2s}}.
\end{align*}

Next, we collect the results obtained for giving the uniform expansion of the sum of the logarithmic Gamma functions:
\begin{align*}
2 \log \Gamma(-\lambda,\hat S_{0,n}/\mu_{0,n}^2) - \log\Gamma(-\lambda, S_{n,+}/\mu_{0,n}^2) - \log\Gamma(-\lambda,
&S_{n,-}/\mu_{0,n}^2) \\
&=\sum_{h=1}^\infty \phi_{h-1}(\sqrt{-\lambda})\mu_{0,n}^{1-h},
\end{align*}
where
\[
\phi_{h-1}(\sqrt{-\lambda})=2 \phi_{h-1,0}(\sqrt{-\lambda}) - \phi_{h-1,+}(\sqrt{-\lambda}) -
\phi_{h-1,-}(\sqrt{-\lambda}),
\]
and 
\begin{align*}
\phi_{1}(\sqrt{-\lambda}) &= - \frac{1}{(1-\lambda)^{\frac{1}{2}}} + \frac{1}{(1-\lambda)^{\frac{3}{2}}}, \\
\phi_{2}(\sqrt{-\lambda}) &= - \frac{3}{2} \frac{1}{1-\lambda} + 2 \frac{1}{(1-\lambda)^{2}} - \frac{3}{2} \frac{1}{(1-\lambda)^3} + 1,\\
\phi_{3}(\sqrt{-\lambda}) &=-\frac{35}{24}
\frac{1}{(1-\lambda)^{\frac{3}{2}}} +\frac{43}{8} \frac{1}{(1-\lambda)^{\frac{5}{2}}}
-\frac{67}{8}\frac{1}{(1-\lambda)^{\frac{7}{2}}} +\frac{107}{24} \frac{1}{(1-\lambda)^{\frac{9}{2}}}.
\end{align*}

Let $\Phi_{h-1}(s)=2 \Phi_{h-1,0}(s) - \Phi_{h-1,+}(s) - \Phi_{h-1,-}(s)$. Then, using the definition in equation (\ref{fi1}), and the formula for the integral in Appendix \ref{appendixA}, we have
\begin{align*}
\Phi_{1}(s) &= \frac{2 \Gamma(s+\frac{1}{2})}{\sqrt{\pi}}, \\
\Phi_{2}(s) &= -\frac{\Gamma(s+1)}{2}(5+5s+\frac{3}{2}s^2),\\
\Phi_{3}(s) &= \frac{\Gamma(s+\frac{3}{2})}{\sqrt{\pi}} \left(\frac{428}{315}
+\frac{22}{35}s + \frac{214}{315}s^2\right),
\end{align*}
and hence
\begin{align*}
\Rz_{s=0}\Phi_{1}(s)&=2 ,
&\Ru_{s=0}\Phi_{1}(s)&=0,\\
\Rz_{s=0}\Phi_{2}(s)&=-\frac{5}{2} ,
&\Ru_{s=0}\Phi_{2}(s)&=0,\\
\Rz_{s=0}\Phi_{3}(s)&=\frac{214}{315} , &\Ru_{s=0}\Phi_{3}(s)&=0.
\end{align*}

Using all these results and the residues of the function $\zeta(s,U_0)$ in the formulas given in Theorem \ref{t4}, we obtain
\begin{align*}
2Z_0(0)-Z_+(0)-Z_-(0)=&-2A_{0,1,0}(0)+A_{0,1,+}(0)+A_{0,1,-}(0),\\
2Z'_0(0)-Z'_+(0)-Z'_-(0)=&-2A_{0,0,0}(0)+A_{0,0,+}(0)+A_{0,0,-}(0)
-2A'_{0,1,0}(0)\\
&+A'_{0,1,+}(0)+A'_{0,1,-}(0)
+ \frac{1}{2\nu}\left(1-\frac{1}{\nu^2}\right)+\frac{107}{315\nu^3}.
\end{align*}

Recall that 
\begin{align*}
2A_{0,1,0}(s)-A_{0,1,+}(s)-A_{0,1,-}(s) &= \sum_{n=1}^{\infty} \frac{(n+1)^2}{\mu_{0,n}^{2s}}\\
&=\nu^{-2s}\zeta(2s,U_0)
&=\nu^{-2s}\zeta\left(s,\Sp_+\Delta_{S^3}+\frac{1}{\nu^2}\right),
\end{align*}
and this gives (see \cite{Spr0})
\[
2A_{0,1,0}(0)-A_{0,1,+}(0)-A_{0,1,-}(0) =\zeta\left(0,\Sp_+\Delta_{S^3}+\frac{1}{\nu^2}\right)=-1,
\]
and hence
\[
2Z_0(0)-Z_+(0)-Z_-(0)=-2A_{0,1,0}(0)+A_{0,1,+}(0)+A_{0,1,-}(0)=1.
\]

In order to deal with the other term, it is convenient to proceed as follows. Since, 
\begin{align*}
2A_{0,0,0}(s)-A_{0,0,+}(s)-A_{0,0,-}(s) = -\sum_{n=1}^{\infty}(n+1)^2
\log\frac{\mu_{0,n}^2-1}{\mu_{0,n}^2}\mu_{0,n}^{-2s},
\end{align*}
we have that
\begin{align*}
A(s)=&2A_{0,0,0}(s)-A_{0,0,+}(s)-A_{0,0,-}(s)+2A'_{0,1,0}(s)-A'_{0,1,+}(s)-A'_{0,1,-}(s)\\
=&-\sum_{n=1}^{\infty}(n+1)^2\log(\mu_{0,n}^2-1)\mu_{0,n}^{-2s}.
\end{align*}

Recalling the definition of $\mu_{0,n}$,
\begin{align*}
A(s)=&-\sum_{n=1}^{\infty}(n+1)^2\log(\nu^2n(n+2))\mu_{0,n}^{-2s}\\
=&-2\log \nu\sum_{n=1}^{\infty}(n+1)^2\mu_{0,n}^{-2s}
-\sum_{n=1}^{\infty}(n+1)^2\log(n(n+2))\mu_{0,n}^{-2s}\\
=&-2(\log\nu)\nu^{-2s}\zeta\left(s,\Sp_+\Delta_{S^3}+\frac{1}{\nu^2}\right)+\nu^{-2s}\zeta'\left(s,\Sp_+\Delta_{S^3}+\frac{1}{\nu^2}\right)\\
=&-2(\log\nu)\nu^{-2s}\sum_{j=0}^\infty\binom{-s}{j}\zeta(s+j,\Sp_+\Delta_{S^3})\nu^{-2j}\\
&+\nu^{-2s}\sum_{j=0}^\infty\binom{-s}{j}\zeta'(s+j,\Sp_+\Delta_{S^3})\nu^{-2j},
\end{align*}
and therefore
\begin{align*}
A(0)&=-2\zeta(0,\Sp_+\Delta_{S^3})\log\nu+\zeta'(0,\Sp_+\Delta_{S^3})\\
&=2\log\nu+2\zeta'(-2)+2\zeta'(0)+\log 2.
\end{align*}

This give
\begin{align*}
2Z'_0(0)-Z'_+(0)-Z'_-(0)&=-A(0)\\
&=-2\log\nu-2\zeta'(-2)+\log \pi
+ \frac{1}{2\nu}\left(1-\frac{1}{\nu^2}\right)+\frac{107}{315\nu^3}.
\end{align*}

We can now compute the torsion using equation (\ref{ttt})
\begin{align*}
\log T(C_\alpha S^3_{la})=&\left(\frac{1}{4}+\frac{1}{4}+\frac{1}{2}\right)\log l^2\\
&-\log 2
-\frac{1}{2}\log\nu+\zeta'(-2)+\frac{1}{2}\log 2\pi+\frac{1}{2\nu}-\frac{1}{315\nu^3}\\
&-\log\nu-\zeta'(-2)+\frac{1}{2}\log \pi
+ \frac{1}{4\nu}\left(1-\frac{1}{\nu^2}\right)+\frac{107}{630\nu^3}\\
=&\frac{1}{2}\log \frac{\pi^2l^4}{2\nu^3}+\frac{3}{4}\frac{1}{\nu}-\frac{1}{12\nu^3}.
\end{align*}

We conclude this section reviewing briefly the analysis of the case $n=1$, and $n=2$. All details can be found in \cite{HMS}. In the case $n=1$, the torsion is given by
\[
\log T(C_\alpha S^1_{l\sin\alpha})=\left(\frac{1}{4}+Z(0)-\hat Z(0)\right)\log l^2+Z'(0)-\hat Z'(0)-\frac{1}{2}\log 2,
\]
where 
\[
Z(s)=\sum_{n,k=1}^\infty j_{\nu n,k}^{-2s},\qquad \hat Z(s)=\sum_{n,k=1}^\infty (j'_{\nu n,k})^{-2s}.
\]

Therefore, the analysis is very similar to the one performed in the previous part I, with the main difference that now the zeta function $\zeta(s,U)$ is $\nu^{-s}\zeta(s)$. Therefore, we just have a simple pole at $s=1$, and we only need the expansion of the logarithmic Gamma function up to order $\nu^{-1}$. 

The case of the sphere is a bit more complicate. Now, 
\[
\log T(C_\alpha S^2_{l\sin\alpha})= \left(\frac{3}{4} + \frac{1}{2} X_+(0) - \frac{1}{2} X_-(0) \right) \log l^{2} +
\frac{1}{2}X_+'(0)-\frac{1}{2}X_-'(0)+\frac{1}{2}\log\frac{4}{3},
\]
where
\[
X_+(s)=\sum_{n,k=1}^\infty (2n+1) \hat j_{\mu_n,k}^{-2s},\qquad X_-(s)=\sum_{n,k=1}^\infty (2n+1)\hat j_{\mu_n,k}^{-2s},
\]
$\mu_n=\sqrt{\nu^2 n(n+1)+\frac{1}{4}}$, and  the $\hat j_{\nu,k,\pm}$ are the zeros of the function $G^{\pm}_{\nu}(z)=\pm\frac{1}{2}J_{\nu}(z)+zJ'_\nu(z)$. The zeta function $\zeta(s,U)$ is now related to the zeta function of the Laplace operator on the 2-sphere:
\[
\zeta(2s,U)=\nu^{-2s}\zeta\left(s,\Sp_+\Delta_{S^2}^{(0)}+\frac{1}{4\nu^2}\right).
\]

It is known (see for example \cite{Spr4}), that $\zeta(s,\Sp_+\Delta_{S^2}^{(0)})$ has one simple pole at $s=1$. This gives
\[
\zeta(s,U)=\frac{2}{\nu^2}\frac{1}{s-2}+f(s),
\]
where $f(s)$ is some regular function. Thus,
\begin{align*}
X_+(0) - X_-(0)=&-A_{0,1,+}(0)+A_{0,1,-}(0) + \frac{1}{\nu^2} \Ru_{s=0} (\Phi_{2,+}(s) - \Phi_{2,-}(s))\\
X'_+(0)-X'_-(0)=&-(A_{0,0,+}(0)+A_{0,1,+}'(0)-A_{0,0,-}(0)- A_{0,1,-}'(0))\\
&+\frac{1}{\nu^2}\Rz_{s=0}(\Phi_{2,+}(s)-\Phi_{2,-}(s))\\
&+\left(\frac{\gamma}{\nu^2}+K\right)\Ru_{s=0}(\Phi_{2,+}(s)-\Phi_{2,-}(s)).\\
\end{align*}

Next, proceeding as in the part II above, and introducing the functions
\[
G^\pm_\nu(z)=\pm\frac{1}{2}J_\nu(z)+zJ'_\nu(z),
\]
we obtain the product representation
\begin{align*}
H^\pm_\nu(z)&=\pm\frac{1}{2}I_\nu(z)+zI'_\nu(z)=\left(1\pm\frac{1}{2\nu}\right)\frac{z^\nu}{2^\nu\Gamma(\nu)}
\prod_{k=1}^{\infty}\left(1+\frac{z^2}{z^2_{\nu,k,\pm}}\right),
\end{align*}
where $H^\pm_\nu(z)=\e^{-\frac{\pi}{2}i\nu}G_\nu^\pm(i z)$. This allows to obtain the expansion
\begin{align*}
\log\Gamma(-\lambda, S_{n,\pm}/\mu^2_n)=&\sum_{h=0}^\infty \phi_{h-1,\pm}(\lambda) \mu_n^{1-h}\\
=&\left(1-\sqrt{1-\lambda}+\log(1+\sqrt{1-\lambda})-\log 2\right)\mu_n\\
&-\frac{1}{4}\log(1-\lambda)+\left(-W_{1,\pm}(\sqrt{-\lambda})\pm \frac{1}{2}-\frac{1}{12}\right)\frac{1}{\mu_n}\\
&+\left(-W_{2,\pm}(\sqrt{-\lambda})+\frac{1}{2}W_{1,\pm}^2(\sqrt{-\lambda})-\frac{1}{8}\right)\frac{1}{\mu^2_n}+O\left(\frac{1}{\mu_n^3}\right),
\end{align*}
where $p=\frac{1}{(1-\lambda)^\frac{1}{2}}$, and
\begin{align*}
&W_{1,\pm}(p)=V_1(p)\pm\frac{1}{2}p,&W_{2,\pm}(p)=V_2(p)\pm \frac{1}{2}pU_1(p),
\end{align*}
\begin{align*}
&W_{1,+}(p)=\frac{1}{8}p+\frac{7}{24}p^3,&W_{2,+}(p)=-\frac{7}{128}p^2+\frac{79}{192}p^4-\frac{455}{1152}p^6,\\
&W_{1,-}(p)=-\frac{7}{8}p+\frac{7}{24}p^3,&W_{2,-}(p)=-\frac{28}{128}p^2+\frac{119}{192}p^4-\frac{455}{1152}p^6.\\
\end{align*}

This gives, 
\[
\phi_{2,+}(\lambda)-\phi_{2,-}(\lambda)=-\frac{1}{2}\left(\frac{1}{1-\lambda}-\frac{1}{(1-\lambda)^2}\right),
\]
and hence using the definition in equation (\ref{fi1}),
\begin{align*}
\Phi_{2,+}(s)-\Phi_{2,-}(s)&=-\frac{1}{2}\int_0^\infty t^{s-1}\frac{1}{2\pi i}\int_{\Lambda_{\theta,c}}\frac{\e^{-\lambda t}}{-\lambda}\left(\frac{1}{1-\lambda}-\frac{1}{(1-\lambda)^2}\right).\\
\end{align*}

Using the formula in Appendix \ref{appendixA}, we obtain
\begin{align*}
\Phi_{2,+}(s)-\Phi_{2,-}(s)&=\frac{1}{2}\Gamma(s+1),\\
\end{align*}
and hence
\begin{align*}
&\Rz_{s=0}(\Phi_{2,+}(s)-\Phi_{2,-}(s))=\frac{1}{2},&\Ru_{s=0}(\Phi_{2,+}(s)-\Phi_{2,-}(s))=0.\\
\end{align*}

This gives
\begin{align*}
Z_+(0)-Z_-(0)&= -A_{0,1,+}(0) + A_{0,1,-}(0)\\
Z_+'(0)-Z_-'(0)&=-(A_{0,0,+}(0)+A_{0,1,+}'(0)-A_{0,0,-}(0)- A_{0,1,-}'(0))+\frac{1}{2\nu^2}.\\
\end{align*}

Eventually, using the expansion for large $z$ of the functions $H_\nu^\pm(z)$, we obtain
\begin{align*}
\log\Gamma(-\lambda,S_{n,\pm}/\mu_n^2)=&-\mu_n \sqrt{-\lambda}+\frac{1}{2}\left(\mu_n-\frac{1}{2}\right)\log(-\lambda)
+\frac{1}{2}\log 2\pi\\
&+\left(\mu_n-\frac{1}{2}\right)\log\mu_n
-\log 2^{\mu_n}\Gamma(\mu_n)\\
&+\log\left(1\pm\frac{1}{2\mu_n}\right) +O\left(\frac{1}{\sqrt{-\lambda}}\right),
\end{align*}
and hence
\begin{align*}
a_{0,0,n,\pm}&=\frac{1}{2}\log 2\pi+\left(\mu_n-\frac{1}{2}\right)\log\mu_n-\log 2^{\mu_n}\Gamma(\mu_n)
+\log\left(1\pm\frac{1}{2\mu_n}\right),\\
a_{0,1,n,\pm}&=\frac{1}{2}\left(\mu_n-\frac{1}{2}\right),\\
b_{2,0,0,\pm}&=-\frac{1}{8},\hspace{30pt}b_{2,0,1,\pm}=0.\\
\end{align*}

This immediately shows that $A_{0,1,+}(s)=A_{0,1,-}(s)$, and therefore $X_+(0)-X_-(0) = 0$. Next,
\begin{align*}
A_{0,0,+}(s)-A_{0,0,-}(s)&=\sum_{n=1}^\infty (2n+1) \mu_n^{-2s}\left(\log\left(1+\frac{1}{2\mu_n}\right)-\log\left(1-\frac{1}{2\mu_n}\right)\right)\\
&=F(s,\nu).
\end{align*}

Note that this series converges uniformely for $\Re(s)>2$, but using the analytic extension of the zeta function $\zeta(s,U)$, has an analytic extension that is regular at $s=0$. Therefore,
\begin{align*}
X_+'(0)- X_-'(0)=&-\Rz_{s=0}F(s,\nu)+\frac{1}{2\nu^2}=-\log \frac{\nu^2}{\pi}-f(\nu)+\frac{1}{2\nu^2},
\end{align*}
and this concludes the proof in this case. A power series representation for the function $f(\nu)$ is (see \cite{HMS} Appendix B)
\begin{align*}
f(\nu)=&\log\frac{\nu^2}{\pi}+\zeta(\frac{1}{2},\Sp_+\Delta^{(0)}_{S^2})\frac{1}{\nu}\\
&+\sum_{\substack{j,k=0,\\ j+k\not=0}}^\infty \frac{1}{(2k+1)2^{2k}} \frac{1}{2^{2j}}\binom{-k-\frac{1}{2}}{j}\frac{\zeta(k+j+\frac{1}{2},\Sp_+\Delta^{(0)}_{S^2})}{\nu^{2k+2j+1}}.
\end{align*}

\section{The higher dimensional cases}
\label{s5}

In case of a smooth compact connect Riemannian manifold $(M,g)$ with boundary $\b M$, the analytic torsion is given by the Reidemeister torsion plus some further contributions. It was shown in \cite{Che1}, that this further contribution only depends on the boundary, namely that
\[
\log T(M)=\log\tau(M)+C(\b M).
\]

In the case of a product metric near the boundary, the following formula for this contribution was given by L\"uck \cite{Luc}
\[
\log T(M)=\log\tau(M)+\frac{1}{4}\chi(\b M)\log 2.
\]

In  the general case a further contribution appears, that measures how the metric is {\it far} from a product metric:
\[
\log T(M)=\log\tau(M)+\frac{1}{4}\chi(\b M)\log 2+A(\b M).
\]

A formula for this new {\it anomaly} contribution has been recently given by Br\"uning and Ma \cite{BM}. More precisely, in \cite{BM} (equation (0.6)) is given a formula for the ratio of the analytic torsion of two metrics, $g_0$ and $g_1$,
\beq\label{bat}
\begin{aligned}
\log \frac{T(M,g_1)}{T(M,g_0)}=
\frac{1}{2}\int_{\b M} \left(B(\nabla_1^{T M})-B(\nabla_0^{T M})\right),
\end{aligned}
\eeq
where $\nabla_j^{TM}$ is the curvature form of the metric $g_j$, and the 
forms $B(\nabla_j^{TX})$ are defined in equation
(1.17) of \cite{BM} (see equation \ref{ebm1} below, and observe that we take the opposite sign with respect to the definition in \cite{BM}, since we are considering left actions instead of right actions). Note  that we use the formula of \cite{BM} in the particular case of a flat trivial bundle $F$. Taking $g_1=g$, and $g_0$ an opportune deformation of $g$, that is a product metric near the boundary, 
\[
A(\b M)=\log \frac{T(M,g_1)}{T(M,g_0)},
\]
and therefore

\beq\label{pop1}
\log T(M)=\log\tau(M)+\frac{1}{4}\chi(\b M)\log 2+
\frac{1}{2}\int_{\b M} \left(B(\nabla_1^{T M})-B(\nabla_0^{T M})\right).
\eeq

Since the whole boundary contribution is a local invariant of the boundary, the formula in equation (\ref{bat}) holds in the case of a cone $M=CW$, and therefore in the case under study: $M=C_\alpha S_{l\sin\alpha}^{m-1}$. We compute the contribution given by the formula in equation (\ref{bat}) with respect to the metric induced by the immersion and an opportune product metric. Our result is stated in the following lemma.

\begin{lem} Consider the two metrics 
\begin{align*}
g_1 &= dr \otimes dr + a^2 r^2 g_{S^{n}},\\
g_0& = dr\otimes dr + a^2 l^{2}  g_{S^{n}},
\end{align*}
on $C_\alpha S^{n}_{la}$, where $a=\sin\alpha$. Then, ($p>0$)
\begin{align*}
\log &\frac{T(C_\alpha S^{2p}_{la},g_1)}{T(C_\alpha S^{2p}_{la},g_0)}= \frac{a^{2p}}{8}
\sum_{j=0}^{[p-\frac{1}{2}]} \frac{1}{j!(p-j)!} \sum_{h=0}^{j} \binom{j}{h}
\frac{(-1)^{h}}{(p-j+h)a^{2(j-h)}} \chi(S^{2p}_{la})\\
\log &\frac{T(C_\alpha S^{2p-1}_{la},g_1)}{T(C_\alpha S^{2p-1}_{la},g_0)}\\
&= \sum_{j=0}^{p -1} \frac{2^{p-j}}{j!(2(p-j)-1)!!} \sum_{h=0}^{j} \binom{j}{h}
\frac{(-1)^{h}}{(2(p-j+h)-1)a^{2(j-h)}} \frac{a^{2p-1} (2p-1)!}{4^{p} (p-1)!}
\end{align*}
\label{ele}
\end{lem}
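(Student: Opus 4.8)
The plan is to apply the Brüning--Ma anomaly formula (\ref{bat}) directly to the pair of metrics $g_1$ (the induced cone metric) and $g_0$ (the product metric $dr\otimes dr+a^2l^2 g_{S^n}$), both restricted to a collar of the boundary $\{r=l\}\cong S^n_{la}$, and then to evaluate the resulting boundary integral of $B(\nabla^{TM})$ explicitly. The key observation is that the integrand in (\ref{bat}) is a universal polynomial in the curvature form of the Levi-Civita connection near the boundary and in the second fundamental form; since the boundary is a round sphere $S^n$ of radius $la$ and the collar geometry of each $g_j$ is that of a cone (respectively a cylinder) over it, everything reduces to standard invariants of $S^n$.

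First I would write down $B(\nabla^{TM}_j)$ from equation (1.17) of \cite{BM} in the form appropriate to our situation: for a manifold with boundary and a metric that is a warped product $dr\otimes dr+f(r)^2 g_{S^n}$ near $r=l$, the form $B(\nabla^{TM})$ on $\b M$ is a Berezin-integral expression involving the curvature $\mathcal{R}$ of the boundary together with the shape operator $\mathcal{S}=\frac{f'(l)}{f(l)}\,\mathrm{Id}$. For $g_1$ we have $f(r)=ar$, so $\mathcal{S}=\frac{1}{l}\mathrm{Id}$, while for $g_0$ we have $f(r)=al$ constant, so $\mathcal{S}=0$; in both cases the intrinsic curvature of $S^n_{la}$ is that of a round sphere of radius $la$. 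Substituting, $B(\nabla^{TM}_1)-B(\nabla^{TM}_0)$ becomes an explicit $n$-form on $S^n_{la}$ built from the round curvature and the scalar $1/l$. Second, I would carry out the Berezin/Pfaffian bookkeeping: expanding the generating expression for $B$ in powers of the shape operator produces exactly a sum over $j$ (the number of curvature factors replaced by shape-operator factors) with binomial weights, and a further expansion in $h$ coming from writing the round sectional curvature of $S^n_{la}$, which is $\frac{1}{(la)^2}$, against the ``bare'' sphere normalisation; this is the source of the double sum $\sum_j\sum_h\binom{j}{h}$ and of the powers $a^{2(j-h)}$ and the factors $(p-j+h)^{-1}$ (or $(2(p-j+h)-1)^{-1}$) that appear. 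Finally, I would integrate over $S^n_{la}$: the top curvature term integrates to (a multiple of) the Euler characteristic $\chi(S^{2p}_{la})$ in the even case $n=2p$, which is why the even formula carries $\chi(S^{2p}_{la})$ as an overall factor; in the odd case $n=2p-1$, $\chi=0$ but the relevant integral of the appropriate transgression form over the odd sphere is the explicit constant $\mathrm{Vol}(S^{2p-1}_{la})$ rewritten via $\frac{(2p-1)!}{4^p(p-1)!}$ and powers of $a$, giving the stated closed form. The factor $\frac{1}{2}$ from (\ref{bat}) and the remaining numerical normalisations ($\frac{1}{8}$, the double factorials, etc.) are tracked through this computation.

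The main obstacle I expect is purely computational rather than conceptual: correctly normalising the Brüning--Ma form $B(\nabla^{TM})$ — in particular the Berezin integral conventions, the sign (the paper already flags that it takes the opposite sign to \cite{BM}), and the precise combinatorial constants in the expansion in powers of the second fundamental form — so that the two cases $n=2p$ and $n=2p-1$ come out with exactly the displayed coefficients. A secondary point requiring care is the odd-dimensional case, where one cannot simply invoke Chern--Gauss--Bonnet and must instead identify the integral over $S^{2p-1}_{la}$ of the degree-$(2p-1)$ component of $B$ with an explicit volume-type constant; getting the dependence on $a$ and $l$ right there (noting that $g_0$ contributes nothing since its shape operator vanishes, so the whole answer comes from the $g_1$ term) is where an error is most likely to creep in. Once the integrand is pinned down, the remaining steps are a routine expansion of a warped-product Pfaffian and an integration against the volume form of a round sphere.
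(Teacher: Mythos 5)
Your plan coincides with the paper's own proof: it applies the Br\"uning--Ma formula (\ref{bat}), observes that the product metric $g_0$ contributes nothing because its second-fundamental-form term $\mathcal{S}_0$ vanishes, expands the Berezin-integral expression for $B(\nabla_1^{TM})$ in powers of $\mathcal{S}_1$ (using $\mathcal{R}=-\frac{2}{a^2}\mathcal{S}_1^2$, which is exactly your ``round curvature against the bare normalisation'' step producing the $j,h$ double sum and the powers of $a$), and then treats the even case via the Pfaffian and Gauss--Bonnet, giving $\chi(S^{2p}_{la})$, and the odd case by integrating the degree-$(2p-1)$ Berezin term explicitly against $\Vol(S^{2p-1}_{la})$. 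The only caveat is the one you already flag yourself: the normalisation of $\mathcal{S}_1$ in the paper is via $\omega_1-\omega_0$ (so it carries factors of $a$, not simply $\frac{1}{l}\,\mathrm{Id}$), and the numerical constants must be tracked through that convention.
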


\begin{proof} The proof is a generalization of the proofs of Lemmas 1 and 2 of \cite{HMS}. We first recall some notation from \cite{BZ}  Chapter III and \cite{BM} Section 1.1. For two $\Z/2$-graded algebras $\A$ and $\B$,
let $\A\hat\otimes\B=\A\wedge\hat\B$ denotes the $\Z/2$-graded
tensor product. For two real finite dimensional vector spaces $V$
and $E$, of dimension $m$ and $n$, with $E$  Euclidean and oriented,
the Berezin integral  is the linear map
\begin{align*}
\int^B&: \Lambda V^* \hat\otimes   \Lambda E^* \to \Lambda V^*, \\
\int^B:&\alpha \hat\otimes \beta\mapsto \frac{(-1)^{\frac{n(n+1)}{2}}}{\pi^\frac{n}{2}}\beta(e_1,\dots, e_n)\alpha,
\end{align*}
where $\{e_j\}_{j=1}^n$ is an orthonormal base of $E$. Let $A$ be an antisymmetric endomorphism of $E$. Consider the map
\[
{\hat{}} :A\mapsto \hat A=\frac{1}{2} \sum_{j,l=1}^n (e_j,A e_l)
\hat e^j\wedge \hat e^l.
\]

Note that 
\beq\label{pfpf} 
\int^B \e^{-\frac{\hat A}{2}}=Pf\left(\frac{A}{2\pi}\right), 
\eeq 
and this vanishes if ${\rm dim}E=n$ is odd.

Let $\omega_j$ be the curvature one form over $C_{\alpha}S^{m-1}_{l\sin\alpha}$ associated to the metric $g_j$. Let
$\Theta$ be the curvature two form of the boundary $S^{m-1}$ (with radius 1) and the standard Euclidean metric. Let
$\tensor{(\omega_j)}{^{a}_{b}}$ denotes the entries with line $a$ and column $b$ of the matrix of one forms $\omega_j$.
Then, we introduce the following quantities (see \cite{BM} equations (1.8) and (1.15))
\beq\label{pippo}\begin{aligned}
\mathcal{S}_j&=\frac{1}{2}\sum_{k=1}^{m-1}\tensor{(\omega_j-\omega_0)}{^{r}_{\theta_k}}\hat e^{\theta_k},\\
\hat \Omega&=\mathcal{R}^{T C_{\alpha}S^{m-1}_{l\sin\alpha}}|S^{m-1}_{la}=
\frac{1}{2}\sum_{k,l=1}^{m-1}\tensor{\Omega}{^{\theta_k} _{\theta_l}}
\hat e^{\theta_k}\wedge \hat e^{\theta_l}\\
\mathcal{R}&=\hat \Theta=\frac{1}{2}\sum_{k,l=1}^{m-1}\tensor{\Theta}{^{\theta_k}_{\theta_l}} \hat e^{\theta_k}\wedge \hat e^{\theta_l}.\\
\end{aligned}
\eeq

Direct calculations starting from the metrics $g_j$ allow to obtain explicit formulas for all these forms. The calculations in the present case are a slight generalization of the calculations presented in the proof of Lemma 2 of \cite{HMS}, and we refer to that work for further details. We find that the non zero entries of the matrices appearing in equation (\ref{pippo}) are
\begin{align*}
\tensor{(\omega_1 - \omega_0)}{^{r}_{\theta_i}} &= -  a \prod^{m-1}_{j=i+1} \sin{\theta_j} d\theta_i,\\
\tensor{\Omega}{^{\theta_i}_{\theta_k}}&= (1-a^{2})\prod_{j=i+1}^{k} \sin{\theta_j} \prod_{s=k+1}^{m-1}
\sin^{2}{\theta_s} d\theta_i \wedge d\theta_k, \hspace{20pt}i<k,\\
\tensor{\Theta}{^{\theta_i}_{\theta_k}}&= \prod_{j=i+1}^{k} \sin{\theta_j} \prod_{s=k+1}^{m-1} \sin^{2}{\theta_s}
d\theta_i \wedge d\theta_k, \hspace{20pt}i<k.
\end{align*}

Note that for $i<k$ 
\[
\tensor{((\omega_1 - \omega_0)^2)}{^{\theta_k}_{\theta_i}}  = -a^{2} \prod_{j=i+1}^{k}
\sin{\theta_j} \prod^{m-1}_{s=k+1} \sin^{2}{\theta_s} d\theta_i \wedge d\theta_k. 
\]

Then, recalling $\mathcal{R} = \hat\Omega - 2 \mathcal{S}_1^2$ by equation (1.16) of \cite{BM}, is easy to see
\[
\mathcal{R} =-\frac{2}{a^2} \mathcal{S}_1^2.
\]

Following \cite{BM}, equation (1.17), we define 
\beq\label{ebm1} 
B(\nabla_j^{T C_{\alpha}S^{m-1}_{l\sin\alpha}})=\frac{1}{2}\int_0^1\int^B
\e^{-\frac{1}{2}\mathcal{R}-u^2 \mathcal{S}_j^2}\sum_{k=1}^\infty \frac{1}{\Gamma\left(\frac{k}{2}+1\right)}u^{k-1}
\mathcal{S}_j^k du. 
\eeq

From this definition it follows that $B(\nabla_0^{T C_{\alpha}S^{m-1}_{l\sin\alpha}})$ vanishes identically, since
$\mathcal{S}_0$ does. It remains to evaluate $B(\nabla_1^{T C_{\alpha}S^{m-1}_{l\sin\alpha}})$. Equation
(\ref{ebm1}) gives
\begin{align*}
B(\nabla_1^{T C_{\alpha}S^{m-1}_{l\sin\alpha}})&=\frac{1}{2}\int_0^1\int^B \e^{(\frac{1}{a^2}-u^2)
\mathcal{S}_1^2}\sum_{k=1}^\infty
\frac{1}{\Gamma\left(\frac{k}{2}+1\right)}u^{k-1} \mathcal{S}_1^k du\\
&=\frac{1}{2}\int^B \sum_{j=0,k=1}^\infty
\frac{1}{j!\Gamma\left(\frac{k}{2}+1\right)}\int_0^1 \left(\frac{1}{a^2}-u^2\right)^ju^{k-1} d u \mathcal{S}_1^{k+2j} \\
&=\frac{1}{2}\int^B \sum_{j=0,k=1}^\infty \frac{1}{j!\Gamma\left(\frac{k}{2}+1\right)} \sum_{h=0}^{j}
\binom{j}{h} \frac{(-1)^{h}}{(2h+k)a^{2(j-h)}} \mathcal{S}_1^{k+2j}.
\end{align*}

Since the Berezin integral vanishes identically whenever $k+2j\not=m-1$, we obtain

\beq\label{epe1} 
\begin{aligned}
B&(\nabla_1^{T C_{\alpha}S^{m-1}_{l\sin\alpha}})\\
&=\frac{1}{2} \sum_{j=0}^{[\frac{m}{2} -1]}
\frac{1}{j!\Gamma\left(\frac{m-2j+1}{2}\right)} \sum_{h=0}^{j} \binom{j}{h}
\frac{(-1)^{h}}{(m-2(j-h)-1)a^{2(j-h)}}\int^B \mathcal{S}_1^{m-1}. 
\end{aligned}
\eeq

Now consider the two cases of even and odd $m$ independently. First, assume $m=2p+1$ ($p\geq 0$). Then, using equation
(\ref{pfpf}), equation (\ref{epe1}) gives

\begin{align*}
B(\nabla_1^{T C_{\alpha}S^{2p}_{l\sin\alpha}})
&=\frac{1}{4} \sum_{j=0}^{[p-\frac{1}{2}]} \frac{1}{j!(p-j)!} \sum_{h=0}^{j} \binom{j}{h}
\frac{(-1)^{h}}{(p-j+h)a^{2(j-h)}} \int^B \mathcal{S}_1^{2p}\\
&=\frac{1}{4} \sum_{j=0}^{[p-\frac{1}{2}]} \frac{1}{j!(p-j)!} \sum_{h=0}^{j} \binom{j}{h}
\frac{(-1)^{h}}{(p-j+s)a^{2(j-h)}} \int^B \frac{(-a^2)^p}{2^p}\mathcal{R}^{p}\\
&=\frac{a^{2p}}{4} \sum_{j=0}^{[p-\frac{1}{2}]} \frac{1}{j!(p-j)!} \sum_{h=0}^{j} \binom{j}{h}
\frac{(-1)^{h}}{(p-j+h)a^{2(j-h)}} \int^B \e^{-\frac{\mathcal{R}}{2}}\\
&=\frac{a^{2p}}{4} \sum_{j=0}^{[p-\frac{1}{2}]} \frac{1}{j!(p-j)!} \sum_{h=0}^{j} \binom{j}{h}
\frac{(-1)^{h}}{(p-j+h)a^{2(j-h)}} Pf\left(\frac{\Theta}{2\pi}\right)\\
&=\frac{a^{2p}}{4} \sum_{j=0}^{[p-\frac{1}{2}]} \frac{1}{j!(p-j)!} \sum_{h=0}^{j} \binom{j}{h}
\frac{(-1)^{h}}{(p-j+h)a^{2(j-h)}} e(S^{2p},g_E)\\
\end{align*}
where $e(S^{2p},g_E)$ is the Euler class of $(S^{2p},g_E)$, and we use the fact that
\[
e(S^{2p}_l,g_l)= Pf\left(\frac{\Theta}{2\pi}\right)=\int^{B} \exp(-\frac{\hat\Theta}{2}).
\]

Therefore,

\begin{align*}
\frac{1}{2}\int_{S^{2p}_{l\sin\alpha}}& B(\nabla_1^{T C_{\alpha}S^{2p}_{l\sin\alpha}})\\
 &= \frac{a^{2p}}{8}
 \sum_{j=0}^{[p-\frac{1}{2}]} \frac{1}{j!(p-j)!} \sum_{h=0}^{j} \binom{j}{h}
\frac{(-1)^{h}}{(p-j+h)a^{2(j-h)}} \int_{S^{2p}_{la}} e(S^{2p}_{la},g_E)\\
&= \frac{a^{2p}}{8}  \sum_{j=0}^{[p-\frac{1}{2}]} \frac{1}{j!(p-j)!} \sum_{h=0}^{j} \binom{j}{h}
\frac{(-1)^{h}}{(p-j+h)a^{2(j-h)}} \chi(S^{2p}_{la}).
\end{align*}

Second, assume $m=2p$ ($p\geq 1$). Then, equation (\ref{epe1}) gives

\begin{align*}
B(\nabla_1&^{T C_{\alpha}S^{2p-1}_{l\sin\alpha}})\\
&=\frac{1}{2} \sum_{j=0}^{p -1} \frac{1}{j!\Gamma\left(p-j +
\frac{1}{2}\right)} \sum_{h=0}^{j} \binom{j}{h} \frac{(-1)^{h}}{(2(p-j+h)-1)a^{2(j-h)}} \int^B
\mathcal{S}_1^{2p-1}.
\end{align*}

Now we evaluate $\int^B \mathcal{S}_1^{2p-1}$. Recalling that
\[
\mathcal{R} =-\frac{2}{a^2} \mathcal{S}_1^2,
\] 
we obtain that
\begin{align*}
\int^B \mathcal{S}_1^{2p-1} &= \int^{B} \mathcal{S}_1 \mathcal{S}_1^{2p-2},\\
&= \frac{(-1)^{p-1} a^{2p-2}}{2^{p-1}}\int^{B} \mathcal{S}_1 \mathcal{R}^{p-1}
\end{align*}
and using the esplicit definitions of these forms given in equation (\ref{pippo}), we have

\begin{align*}
\int^B \mathcal{S}_1^{2p-1} =& \frac{(-1)^{p-1} a^{2p-2}}{2^{2p-1}}\int^{B}
\left(\sum_{k=1}^{2p-1}\tensor{(\omega_1-\omega_0)}{^{r}_{\theta_k}} {\hat \e^{\theta_k}}\right)
 \left(\sum_{i,j=1}^{2p-1}\tensor{\Theta}{^{\theta_i}_{\theta_j}}  {\hat \e^{\theta_i}}\wedge  {\hat
 e^{\theta_j}}\right)^{p-1}\\
=&\frac{(-1)^{p} a^{2p-1}}{2^{2p-1}} c_B \\
&\times \sum_{\substack{\sigma \in S_{2p}\\ \sigma(1) = 1}} {\rm sgn}(\sigma)
\tensor{(\omega_1-\omega_0)}{^{1}_{\sigma(2)}} \tensor{(\Omega_0)}{^{\sigma(3)}_{\sigma(4)}} \ldots
\tensor{(\Omega_0)}{^{\sigma(2p-1)}_{\sigma(2p)}}.
\end{align*}
where $c_B=\frac{(-1)^{p(2p-1)}}{\pi^{\frac{2p-1}{2}}}$. Using the same argument used in the final part of the proof of Lemma 2 of \cite{HMS}, we show that
\begin{align*}
\int^B \mathcal{S}_1^{2p-1} &= c_B \frac{(-1)^{p} a^{2p-1} (2p-1)!}{2^{p-1} 2^p} \prod_{j=2}^{2p-1}
(\sin\theta_{j})^{j-1} d\theta_1 \wedge \ldots \wedge d\theta_{2p-1}.
\end{align*}

Then,

\begin{align*}
\int_{S^{2p-1}_{la}} \int^B \mathcal{S}_1^{2p-1} &= \frac{(-1)^{p(2p-1)}}{\pi^{\frac{2p-1}{2}}} \frac{(-1)^{p} a^{2p-1}
(2p-1)!}{2^{p-1} 2^p (la)^{2p-1}} \Vol(S^{2p-1}_{la}) \\
&= \frac{(-1)^{p(2p-1)}}{\pi^{\frac{2p-1}{2}}} \frac{(-1)^{p} a^{2p-1} (2p-1)!}{2^{p-1} 2^p (la)^{2p-1}} \frac{2
\pi^{p} (la)^{2p-1}}{(p-1)!}\\
&=\frac{1}{\pi^{-\frac{1}{2}}} \frac{a^{2p-1} (2p-1)!}{2^{p-1} 2^{p-1} } \frac{1}{(p-1)!}
\end{align*}

and
\begin{align*}
\frac{1}{2}\int_{S^{2p-1}_{l\sin\alpha}} &B(\nabla_1^{T C_{\alpha}S^{2p-1}_{l\sin\alpha}})\\
=& \sum_{j=0}^{p -1} \frac{1}{j!\Gamma\left(p-j + \frac{1}{2}\right)} \sum_{h=0}^{j}\binom{j}{h} \frac{(-1)^{h}}{(2(p-j+h)-1)a^{2(j-h)}} \int_{S^{2p-1}_{l\sin\alpha}} \int^B \frac{\mathcal{S}_1^{2p-1}}{4}\\
=& \sum_{j=0}^{p -1} \frac{1}{j!\Gamma\left(p-j + \frac{1}{2}\right)} \sum_{h=0}^{j}
\binom{j}{h} \frac{(-1)^{h}}{(2(p-j+h)-1)a^{2(j-h)}} \frac{a^{2p-1} (2p-1)!}{\pi^{-\frac{1}{2}} 4^{p}
(p-1)!}\\
=& \sum_{j=0}^{p -1} \frac{2^{p-j}}{j!(2(p-j)-1)!!} \sum_{h=0}^{j} \binom{j}{h}
\frac{(-1)^{h}}{(2(p-j+h)-1)a^{2(j-h)}}\frac{a^{2p-1} (2p-1)!}{4^{p} (p-1)!}.
\end{align*}

\end{proof}

We have now all the terms appearing in equation (\ref{pop1}). In fact, the Reidemeister torsion of the cone over a sphere was computed in \cite{HMS}, Proposition 2, 
\[
\log\tau(C_\alpha S^{m-1}_{l\sin\alpha})=\frac{1}{2}{\rm Vol}(C_\alpha S^{m-1}_{l\sin\alpha}).
\]

Comparing with the results given in Theorem \ref{t1}, we detect the contribution of the singularity. It is easy to see that the formula in equation (\ref{pop1}) holds for the cone over the circle and over the $3$-spheres, while a contribution due to the singularity appears in the case of the sphere. This motivates the following conjecture, that is a theorem for $p<3$. 

\begin{conj} \label{c1} The analytic torsion of the cone $C_\alpha S^{2p-1}_{l\sin\alpha}$, of angle $\alpha$, and length $l>0$, over the odd dimensional sphere $S^{2p-1}$, with the standard metric induced by the immersion in $\R^{m+1}$, and absolute boundary conditions is (where $p>0$):
\begin{align*}
\log T(C_\alpha &S^{2p-1}_{l\sin\alpha})=\frac{1}{2}\log {\rm Vol} (C_\alpha S^{2p-1}_{l\sin\alpha})\\
+& \sum_{j=0}^{p -1} \frac{2^{p-j}}{j!(2(p-j)-1)!!} \sum_{h=0}^{j} \binom{j}{h}
\frac{(-1)^{h}{\rm csc}^{2(j-h)}\alpha}{(2(p-j+h)-1)} \frac{ (2p-1)!\sin^{2p-1}\alpha}{4^{p} (p-1)!}.
\end{align*}

\end{conj}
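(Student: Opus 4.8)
\smallskip
\noindent\textbf{Towards Conjecture \ref{c1}.}
The plan is to compute $\log T(C_\alpha S^{2p-1}_{l\sin\alpha})$ directly, by the method of Section \ref{s4}, and to check that the outcome is the sum of the Reidemeister torsion $\frac{1}{2}\log\Vol(C_\alpha S^{2p-1}_{l\sin\alpha})$ (computed in \cite{HMS}) and the Br\"uning--Ma boundary term of Lemma \ref{ele}. Since $\chi(S^{2p-1})=0$, the L\"uck term $\frac{1}{4}\chi(S^{2p-1})\log 2$ vanishes, so the conjecture is precisely the assertion that formula (\ref{pop1}) holds for the cone over an odd sphere; equivalently, that the conical singularity contributes no extra term to the torsion in this case.

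The first step is to write down the spectrum of $\Delta^{(q)}$ on $C_\alpha S^{2p-1}_{l\sin\alpha}$ with absolute boundary conditions, generalising Lemmas \ref{eig1}--\ref{eig3}. This follows from Cheeger's harmonic analysis on metric cones (\cite{Che0}, \cite{Che2}) together with the eigenvalues and multiplicities of the coexact forms of the Laplacian on $S^{2p-1}$ (\cite{IT}): a coexact $q$-eigenform $\phi^{(q)}_n$ of the section with eigenvalue $\lambda_{q,n}$ produces on the cone eigenforms built from $x^{c}J_{\mu_{q,n}}(\lambda x)\phi^{(q)}_n$ and its $x$-derivatives, with
\[\mu_{q,n}=\sqrt{\nu^2\lambda_{q,n}+(p-1-q)^2},\]
and imposing the conditions (\ref{abs}) gives, degree by degree, eigenvalues that are rescaled squared zeros of $J_{\mu_{q,n}}$, of $J'_{\mu_{q,n}}$, and of $T^{\pm}_{\mu_{q,n}}(z)=\pm J_{\mu_{q,n}}(z)+zJ'_{\mu_{q,n}}(z)$, while the two harmonic forms of $S^{2p-1}$ contribute pure sequences of squared Bessel zeros, exactly as in Lemma \ref{eig3}. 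The second step is to form $t(s)=\frac{1}{2}\sum_q(-1)^q q\,\zeta(s,\Delta^{(q)})$ and to use Hodge duality on the cone together with the symmetry $\lambda_{q,n}=\lambda_{2p-1-q,n}$ of the coexact spectrum of $S^{2p-1}$ to cancel the bulk of the terms, reducing $t(s)$ to a short alternating combination of pure Bessel zeta functions $z(s,\nu,0,l)$ of Section \ref{s3} and of double zeta functions $Z_q(s)$, $\hat Z_q(s)$, $Z_{q,\pm}(s)$ over the base sequences $U_q=\{m_{q,n}:\mu_{q,n}\}$; the pure pieces are evaluated at $s=0$ by (\ref{p00}).

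The third step is to treat the double zeta functions by Corollary \ref{c} of Theorem \ref{t4}, with power $\kappa=2$ and length $\ell=2p$. Each $U_q$ is, up to the factor $\nu^{-s}$ and a shift of the eigenvalues by $(p-1-q)^2/\nu^2$, the square-root zeta function of the coexact $q$-form Laplacian on $S^{2p-1}$; since the latter has simple poles only at half-integers, $\zeta(s,U_q)$ has simple poles exactly at the odd integers $s=1,3,\dots,2p-1$, and expanding the binomial in $(p-1-q)^2/\nu^2$ shows that every residue $\Ru_{s=2a-1}\zeta(s,U_q)$ is a polynomial in $\nu^{-2}$ with coefficients built from the (explicit) heat coefficients of $S^{2p-1}$. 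The remaining ingredients are those already used in Parts I and II of Section \ref{s4}: the uniform large-$n$ expansions of $\log\Gamma(-\lambda,S_{q,n}/\mu_{q,n}^2)$ and of the corresponding $\pm$ sequences, obtained from Olver's uniform expansions of $I_\nu(\nu z)$, $I'_\nu(\nu z)$ and of the factorised functions $Q^{\pm}_\nu$, which yield rational functions $\phi_{\sigma_h}(\lambda)$ of $(1-\lambda)^{-1/2}$; the functions $\Phi_{\sigma_h}(s)$ and their residues at $s=0$, via the integral of Appendix \ref{appendixA}; and the finite parts $A_{0,0}(0)$, $A_{0,1}(0)$, $A_{0,1}'(0)$, read off from the classical large-$\lambda$ expansions of the Bessel functions, which as for $S^3$ recombine into $\nu^{-2s}$ times shifted zeta functions of the Laplacian on $S^{2p-1}$ and their $s$-derivatives.

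The main obstacle is the final bookkeeping, uniform in $p$: one must show that in the alternating combination all the transcendental contributions --- the values $\zeta'(-2k)$, the Euler constant $\gamma$, the factors $\log 2\pi$, and the half-integer values $\zeta(k+\tfrac12,\Sp_+\Delta_{S^{2p-1}})$ --- cancel identically, leaving $p\log l+\frac{1}{2}\log\frac{\pi^p\sin^{2p-1}\alpha}{p!}$ plus a polynomial in ${\rm csc}\alpha$, and that this polynomial coincides with the Br\"uning--Ma expression of Lemma \ref{ele}. The cancellation of the $\zeta'(-2k)$ and $\gamma$ terms should follow, as in the $S^3$ computation, from the way the contributions of $Z_q-\hat Z_q$ and of $2Z_{q,0}-Z_{q,+}-Z_{q,-}$ are arranged; the absence of half-integer zeta values --- which is exactly what fails for an even-dimensional link and produces the anomalous term $f({\rm csc}\alpha)$ in $\log T(C_\alpha S^2_{l\sin\alpha})$ --- should follow because for an \emph{odd} sphere $\mu_{q,n}^2-(p-1-q)^2=\nu^2\lambda_{q,n}$ is an explicit polynomial in $n$, so that the sensitive sums $\sum_n m_{q,n}\log\bigl(1-(p-1-q)^2/\mu_{q,n}^2\bigr)\mu_{q,n}^{-2s}$ collapse into $2\log\nu$ times a shifted Laplacian zeta function plus its $s$-derivative, never meeting a half-integer argument. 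Making these two structural facts precise --- in particular, controlling the generating function of the Olver coefficients $U_k$, $V_k$ well enough to evaluate $\sum_h\Rz_{s=0}\Phi_{\sigma_h}(s)\,\Ru_{s=\sigma_h}\zeta(s,U_q)$ in closed form for every $p$ --- is the heart of the matter, and the reason the statement is at present only a conjecture for $p\ge 3$.
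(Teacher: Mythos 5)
The statement you are addressing is stated in the paper only as a conjecture: the paper offers no proof of it. Its evidence is the explicit computations of Theorem \ref{t1} for $p=1,2$ (the cases $S^1$ and $S^3$, done via Lemmas \ref{eig1}--\ref{eig3}, Theorem \ref{t4} and Corollary \ref{c}), together with the observation that these results coincide with the right-hand side of equation (\ref{pop1}), i.e.\ Reidemeister torsion plus the Br\"uning--Ma boundary term computed in Lemma \ref{ele}, with no extra contribution from the conical singularity --- whereas for the even-dimensional link $S^2$ an extra term $f({\rm csc}\,\alpha)$ does appear. Your proposal follows exactly this route (spectral decomposition of the cone spectrum over the coexact spectrum of $S^{2p-1}$, Olver's uniform expansions, Theorem \ref{t4}, and comparison with Lemma \ref{ele}), so in spirit it is the same approach the paper suggests; but it is a research program, not a proof.

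The genuine gap is the one you yourself name in the last paragraph, and it cannot be waved through: (i) the reduction of $t(s)$ to a ``short alternating combination'' requires first establishing the full analogue of Lemma \ref{eig3} for every $p$, i.e.\ determining degree by degree which of the functions $J_{\mu}$, $J'_{\mu}$, $T^{\pm}_{\mu}$ encode the absolute boundary conditions, and verifying the cancellation pattern under Hodge duality --- this is combinatorially nontrivial and is not written down; (ii) the assertion that all transcendental terms ($\zeta'(-2k)$, $\gamma$, $\log 2\pi$) cancel and that no half-integer values $\zeta(k+\tfrac12,\Sp_+\Delta_{S^{2p-1}})$ survive is precisely the claim that the singular contribution vanishes for odd links, which is the content of the conjecture; the $S^2$ case shows this cancellation is delicate and cannot be asserted by analogy; and (iii) the closed-form evaluation, uniform in $p$, of the sums $\sum_h \Rz_{s=0}\Phi_{\sigma_h}(s)\Ru_{s=\sigma_h}\zeta(s,U_q)$ requires control of the Olver coefficients $U_k$, $V_k$ (or of their generating function) that neither you nor the paper provides beyond $k=3$. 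Until these three points are carried out, your text establishes nothing beyond what the paper already knows, namely that the formula holds for $p\le 2$ and is plausible in general; as a proof of Conjecture \ref{c1} it is incomplete, consistent with the statement's status in the paper.
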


\section{Appendix A}
\label{appendixA}

We give here a formula for a contour integral appearing in the text. The proof is in \cite{Spr3} Section 4.2. Let
$\Lambda_{\theta,c}=\{\lambda\in\C~|~|\arg(\lambda-c)|=\theta\}$,
$0<\theta<\pi$, $0<c<1$, $a$ real, then
\[
\int_0^\infty t^{s-1} \frac{1}{2\pi
i}\int_{\Lambda_{\theta,c}}\frac{\e^{-\lambda
t}}{-\lambda}\frac{1}{(1-\lambda)^a}d\lambda
dt=\frac{\Gamma(s+a)}{\Gamma(a)s}.
\]

\end{document}